\newtheorem{assumption}{Assumption}
\newtheorem{theorem}{Theorem}
\newtheorem{remark}{Remark}
\def\BState{\State\hskip-\ALG@thistlm}
\newcolumntype{d}[1]{D{.}{.}{#1}} % "decimal" column type
\def \d  {\delta}
\def \eps {\epsilon}
\def \k  {\kappa}
\def \L  {\mathfrak{L}}
\def \L  {\mathcal{L}}
\def \Om {\Omega}
\def \calT {\mathcal{T}}
\def \calP {\mathcal{P}}
\def \t  {\tau}
\def \del {\nabla}
\def \p  {\partial}
\def \R  {\mathbb{R}}
\def \N  {\mathbb{N}}
\def \LN {\!_{L\to N}}
\def \NL {\!_{N\to N}}
\def \NL {\!_{N\to L}}
\def \LL {\!_{L\to L}}
\def \efl {\bm{\sigma}^i_L}
\def \efn {\bm{\sigma}^i_N}
\newcommand{\norm}[1]{{\left\vert\kern-0.25ex\left\vert\kern-0.25ex\left\vert #1 
    \right\vert\kern-0.25ex\right\vert\kern-0.25ex\right\vert}}
\definecolor{colorL2}{RGB}{255 153 51}
\definecolor{colorL1}{RGB}{255 0 255}
\definecolor{colorLN}{RGB}{255 40 40}
\definecolor{colorNewton}{RGB}{71 150 255}
\newtheorem{proposition}[theorem]{Proposition}
\newtheorem{definition}{Definition}[section]
\title{An adaptive solution strategy for Richards' equation}
\author[1]{Jakob S. Stokke}
\author[2]{Koondanibha Mitra}
\author[1,3]{Erlend Storvik}
\author[1]{Jakub W. Both}
\author[1]{Florin A. Radu}
\affil[1]{Center for Modeling of Coupled Subsurface Dynamics, Department of Mathematics, University of Bergen, Norway}
\affil[2]{Computational Mathematics group, Hasselt University, Belgium}
\affil[3]{Department of Computer science, Electrical engineering and Mathematical sciences, Western Norway University of Applied Sciences, Norway}
\providecommand{\keywords}[1]{\textbf{Keywords:} \textit{#1}}
\begin{document}
\maketitle

\begin{abstract}
%% Text of abstract
Flow in variably saturated porous media is typically modelled by the Richards equation, a nonlinear elliptic-parabolic equation which is notoriously challenging to solve numerically. In this paper, we propose a robust and fast iterative solver for Richards' equation. The solver relies on an adaptive switching algorithm, based on rigorously derived {\it a posteriori} indicators, between two linearization methods: L-scheme and Newton. Although a combined L-scheme/Newton strategy was introduced previously in \cite{ListFlorian2016Asoi}, here, for the first time we propose a reliable and robust criteria for switching between these schemes. The performance of the solver, which can be in principle applied to any spatial discretization and linearization methods, is illustrated through several numerical examples. 
\end{abstract}

%%Graphical abstract
%\begin{graphicalabstract}
%\includegraphics{grabs}
%\end{graphicalabstract}

%%Research highlights
%\begin{highlights}
%\item Research highlight 1
%\item Research highlight 2
%\end{highlights}

\keywords{Iterative linearization, Adaptivity, L-scheme, Newton's method, Richards' equation, Nonlinear degenerate diffusion }

%% \linenumbers

%% main text
\section{Introduction}
\label{sec: intro}

In this paper, we consider the pressure head $\psi$ based formulation of the Richards equation
\begin{equation}\label{eq: Richards equation}
	\partial_t\theta(\psi)-\nabla\cdot\left[K(\theta(\psi))\nabla(\psi+z))\right]=f,
\end{equation}
where $\theta:\R\to [0,1]$ is the water content, $K$ is the rank 2 permeability tensor of the porous medium, $z$ is the height against the gravitational direction, and $f$ is a source/sink term. Richards' equation is used to model the flow of water in saturated/unsaturated porous media. It is a highly nonlinear and degenerate elliptic-parabolic equation which makes solving it a very challenging task, see e.g. the review work of \cite{farthing}. We refer to \cite{alt1983} for the existence and uniqueness of a weak solution of Richards' equation. 

%Paper farthing: https://acsess.onlinelibrary.wiley.com/doi/full/10.2136/sssaj2017.02.0058
%paper Alt&Luckhaus https://link.springer.com/article/10.1007/BF01176474

%It was first proposed in \cite{orgpaperRichards} and serves as a stereotypical example of a nonlinear advection--reaction--diffusion equation as well as a gateway equation for modeling multiphase flow through porous media.

%Apart from being highly nonlinear, Richards equation is further characterized by its degenerate nature which refers to the loss of parabolicity of the equation for both when $\theta(\psi)\searrow 0$ (In which case $K(\theta(\psi))\searrow 0$ and $\theta'(\psi)\searrow 0$)  and when $\theta(\psi)=1$ (In which case $\theta'(\psi)=0$), see \cite{mitra:hal-03328944} for details. Further still, due to the heterogeneity of the soil and the difficulties involved in computing, simplifying the system using variable-transformation tricks such as the Kirchhoff transform, is not possible for practical applications. The nonlinearity, degeneracy, and heterogeneity of the medium impose significant challenges in obtaining numerical solutions for the Richards equation. \jwb{In this paper, a particular focus lies on a robust and efficient linearization.}

There are plenty of works regarding discretization of Richards' equation. Due to the low regularity of solutions of \eqref{eq: Richards equation}, see \cite{alt1984nonstationary}, generally, a backward Euler (implicit) scheme \eqref{eq: time-discrete Richards equation} is  employed to discretize it in time, see e.g. \cite{ListFlorian2016Asoi,radu2008}. Regarding spatial discretization we mention continuous Galerkin finite elements \cite{Arbogast1990,Arbogast1993}, mixed or expanded mixed finite elements \cite{ArbogastWheelerZhang, Woodward, radu2006convergence,Radu2014, Bause}, finite volumes \cite{EymardR1999Tfvm,EymardRobert2006Acfv} (see also the recent review \cite{bassseto2022}), or  multipoint flux approximation (MPFA) \cite{klausen}. Regardless of the choice of the spatial discretization method, one has to solve at each time step a nonlinear, finite-dimensional problem. In this paper, we will focus on how to efficiently solve these problems using iterative linearization techniques.
%Paper Basseto FV: https://link.springer.com/article/10.1007/s10596-022-10150-w

%This gives rise to a sequence of nonlinear elliptic equations \eqref{eq: Galerkin formulation Richards}.   \jwb{Applying an additional spatial discretization, e.g., a conforming Galerkin finite element discretizaton, cf.~\cite{} for an extended discussion of various spatial discretizations,} results in a nonlinear finite dimensional problem that requires an iterative linearization strategy to be solved.

 The main iterative linearization methods used for this type of nonlinear problem are the Newton method, Picard or modified Picard, L-scheme, the Jaeger-Kacur method, or combinations of them. Perhaps the most common choice is the Newton method \cite{bergamaschi1999mixed,lehmann1998comparison} which converges quadratically provided the initial guess is close enough to the final solution. For a $r$-H\"older continuous  $\theta'$ function ($r\in (0,1]$) and the initial guess equal to the solution of the previous time step, it was shown in \cite{radu2006convergence} that the Newton scheme is $(1+r)^{\rm th}$ order convergent if
\begin{align}\label{eq: convergence cond Newton}
    \t\leq  C \theta_m^{\frac{2+r}{r}} h^d,
\end{align}
where $\t>0$ is the time step size, $h>0$ the mesh size, $d\in \N$ the spatial dimension, $C>0$ a constant which depends on the domain and the nonlinearities, and $\theta_m:=\inf \theta'\geq 0$. However, for simulations in 2 or 3 dimensions, condition~\eqref{eq: convergence cond Newton} is quite restrictive particularly if the mesh size $h$ is small, or if the problem is degenerate ($\theta_m=0$). This fact is corroborated by numerical simulations in \cite{ListFlorian2016Asoi,MitraK.2019AmLt} which show that the Newton method fails to converge in many such cases. One can improve the robustness of Newton method by using a damped version of it. Line search, variable switching \cite{brenner2017improving} or trust-regions techniques \cite{hamdi} are examples of such. Alternatively, one can increase the robustness of Newton's method by performing first a few fixed-point iterations. This was proposed in \cite{bergamaschi1999mixed,lehmann1998comparison} by using the Picard method and in \cite{ListFlorian2016Asoi} by using the L-scheme. Nevertheless, the switching between the schemes was not based on an {\it{a posteriori}} indicator, but done in a heuristic manner. 
%Paper hamdi https://www.sciencedirect.com/science/article/pii/S0021999113004725?via%3Dihub

The other linearization schemes are fixed-point type schemes, typically more robust, however only linearly convergent. It has been shown in \cite{celia_Picard,EymardR1999Tfvm} that the Picard method does not perform well for Richards' equation. A modified Picard method was proposed in \cite{celia_Picard}. The modified Picard coincides with Newton's method for the case of a constant permeability, therefore it inherits robustness problems. The L-scheme, first proposed in \cite{PopI.S.2004Mfef,SlodickaM2002Arae,ListFlorian2016Asoi}, is a stabilized Picard method and it was designed to be unconditionally converging irrespective of the choice of the initial guess even in degenerate settings and for larger time steps. The L-scheme (see \Cref{def: L-scheme}) uses a global constant as a  stabilization coefficient, does not involve the computation of any derivatives, and thus, is not only more stable but also consumes less computational time per iteration due to easier assembly of the  stiffness matrices which are better conditioned. Numerical results in \cite{ListFlorian2016Asoi,MitraK.2019AmLt} clearly demonstrate this. However, they also reveal that the L-scheme converges considerably slower in terms of number of iterations compared to the Newton scheme and at a linear rate. Furthermore, its overall performance strongly depends on the careful choice of a tuning parameter; despite theoretical stability, an improper choice may effectively result in stagnation. The sensitivity of the performance of the L-scheme with respect to the stabilization can be significantly relaxed when combining the L-scheme with Anderson acceleration~\cite{anderson1965iterative}. Indeed, for Richards equation extended to deformable porous media and solved by an L-scheme, it has been demonstrated that, first, the stabilization parameter can be chosen outside the theoretical range, and second, the non-degenerate convergence can be retained in case of previous divergence or accelerated, as also discussed from a theoretical perspective~\cite{both2019anderson}. Similar stabilizing properties of the Anderson acceleration have been also discussed for general fixed-point methods~\cite{evans2020proof, pollock2021anderson}. Other fixed point iterations schemes include J\"ager-Kac\v{u}r scheme \cite{jager1995solution} which converges unconditionally albeit slowly, and is more computationally expensive than the L-scheme per iteration, see \Cref{tab:linear schemes}. The modified L-scheme, proposed in \cite{MitraK.2019AmLt}, shows stability similar to the L-scheme while having much faster convergence rates (scaling with $\t$); yet, the convergence is still linear.

In this paper, we investigate a hybrid strategy, dynamically switching between the L-scheme and Newton's method. This utilizes the advantages of both methods: the unconditional stability of the L-scheme, and the quadratic convergence of Newton's method when close to the exact solution. The crucial difference to previous works on hybrid approaches, e.g.~\cite{ListFlorian2016Asoi,bergamaschi1999mixed}, is the adaptive nature of the switch between both linearization methods. A switch from the L-scheme to Newton's method is performed when the iterate is sufficiently close to the solution. This finally allows us to   balance robustness and speed.
% In this paper, we investigate the strategy to start the linearization iterations with the L-scheme which is unconditionally converging, and then to switch to the Newton scheme when the iterates are close enough to the exact solution, thus harnessing the quadratic convergence of the Newton scheme. This strategy allows us to construct an iterative method which is both stable and fast. 

The main challenge in implementing this strategy originates from deriving a rigorous switching criteria between the schemes. Since, the {\it a priori} estimates, such as the ones provided in  \cite{radu2006convergence}, involve unknown constants and assume the worst-case scenario, we pursue an {\it a posteriori} estimate-based approach here instead. A rigorous and efficient  {\it a posteriori} estimator for the fully degenerate Richards equation involving linearization errors was derived in \cite{mitra:hal-03328944} in the continuous space-time setting. For the time-discrete problem \eqref{eq: time-discrete Richards equation}, a robust, efficient, and reliable estimator was derived in \cite{Mitra2022lin} using an orthogonal decomposition result dividing the total error into a discretization and a linearization component. Furthermore, its effectiveness was demonstrated numerically. These papers serve as the main inspirations in deriving the {\it a posteriori} based switching criteria  in \Cref{sec: a-posteriori} and an adaptive L-scheme algorithm in Appendix \ref{sec: LtoL}. Nevertheless, since we are only interested in computing the linearization error component, the computation of equilibrated flux will be avoided wherever possible.

The paper is organized as follows. In \Cref{sec:MathForm}, we introduce the mathematical notation, state the assumptions, define the fully-discrete solution, and elaborate on different linearization methods. In \Cref{sec: a-posteriori}, the adaptive switching algorithm is developed. Firstly, a concept of linearization error is introduced along with the derivation of a predictive indicator for linearization error of the next iteration. The adaptive algorithm compares the linearization error with the estimator to   determine the exact switching points. In \Cref{sec: numerics}, three numerical test cases (partially saturated, degenerate, and realistic benchmarks) are presented which illustrate the robustness and computational efficiency of the adaptive scheme compared to the standard Newton's method or the L-scheme.  \Cref{sec: conclusions} contains the conclusions of this work. The paper ends with two appendices, one concerning an adaptive L-scheme and the other on the details of the computation of the equilibrated flux.

\section{Mathematical and numerical formulation}\label{sec:MathForm}

We consider Richards' equation in the space-time domain $\mathcal{G}=\Omega\times[0,T]$, where  $\Omega$ is a bounded domain in $\mathbb{R}^d$ with a Lipschitz continuous boundary $\partial\Omega$, and $T>0$. Let $(\cdot,\cdot)$ and $\|\cdot\|$ be the inner product and norm of the square-integrable functions in $\Omega$, i.e. $L^2(\Omega)$, respectively. Moreover, using common notation from functional analysis, $H^1(\Omega)$ represents the Sobolev space of functions with first-order weak derivatives in $L^2(\Omega)$, and $H_0^1(\Omega)$ its subspace containing functions with vanishing trace at the boundary.

\begin{assumption}\label{as:auxiliary_functions}
For the material properties $\theta$ and $K$, and source term $f$ in \eqref{eq: Richards equation}, the following assumptions are made:
\begin{itemize}
    \item[(a)] The saturation function $\theta(\cdot)$ is Lipschitz continuous and monotonically increasing with $L_\theta$ and $\theta_m\geq 0$ being the Lipschitz constant and the lower bound for the derivative, respectively.  
    
    \item[(b)] The permeability tensor $K:[0,1]\to \R^{d\times d}$  satisfies the uniform (pseudo) ellipticity condition, i.e., for constants $\kappa_M>\kappa_m\geq0$,
    \begin{align*}
       \kappa_m  |\bm{z}|^2\leq   \bm{z}^{\rm T}\,K\,  \bm{z}\leq  \kappa_M  |\bm{z}|^2,\quad  \forall\, \bm{z}\in  \R^d.\; %\theta\in  [0,1].
    \end{align*}
    Moreover, $(K\circ\theta)$ is Lipschitz continuous, with Lipschitz constant $L_\kappa$.
    \item[(c)] The source function satisfies $f\in C(0,T;L^2(\Om)).$
\end{itemize}
 
\end{assumption}

Note that these assumptions are consistent with the commonly used Brooks-Corey \cite{brooks1966properties} and van Genuchten \cite{vanGenuchtenMTh1980ACEf} parametrizations of the functions $\theta$ and $K$.

\subsection{Time-discretization: Backward Euler}\label{sec:euler}

To discretize the Richards equation in time we consider the backward-Euler time discretization of \eqref{eq: Richards equation}. For this implicit scheme, no CFL conditions need to be satisfied for stability (thus avoiding restrictions on the time step size). Moreover, it does not require higher-order time regularity (unlike the Crank-Nicholson scheme) to converge to the time-continuous solutions. We subdivide the time-interval $[0,T]$ uniformly $N$ times with time step size $\tau=T/N$ and discrete time steps $t_n=\tau n$, where $n\in\left\{1,...,N\right\}$. Then, we look for a sequence $\{\psi^n\}_{n=1}^N$ of functions in $\Om$, satisfying the time-discrete system
\begin{align}\label{eq: time-discrete Richards equation}
    	\frac{\theta(\psi^n)-\theta(\psi^{n-1})}{\t}-\nabla\cdot\left[K(\theta(\psi^n))\nabla(\psi^n+z))\right]=f(t_n).
\end{align}

Denoting $f(t_n)$ by $f^n$ subsequently, a more precise and general definition of the weak solutions of \eqref{eq: time-discrete Richards equation}  is given below. For simplicity, we assume homogeneous Dirichlet boundary condition although our results are valid for Dirichlet and Neumann boundary conditions in general.

%The continuous Galerkin formulation of eq. (\ref{eq: Richards equation}) is"

\begin{definition}[Backward Euler time-discretization   of \eqref{eq: Richards equation}]
Let $\psi^0\in L^2(\Om)$ be given. Then the sequence $\{\psi^n\}_{n=1}^N\subset H^1_0(\Om)$ is the backward Euler solution of \eqref{eq: Richards equation} if for all $n\in\left\{1,...,N\right\}$, and $v\in H^1_0(\Om)$,
\begin{equation}\label{eq: Galerkin formulation Richards}
	\frac{1}{\t}( \theta(\psi^n)-\theta(\psi^{n-1}),v )+( K(\theta(\psi^n))\nabla(\psi^n+z),\nabla v)=( f^n,\,v).
\end{equation}
    
\end{definition}

% find  such that
% \begin{equation}\label{eq: Galerkin formulation Richards}
% 	( \partial_t\theta(\psi),\varphi)+( K(\theta(\psi))\nabla(\psi+z),\nabla\varphi)=( f,\varphi), \quad\forall \varphi\in H_0^1(\Omega).
% \end{equation}

% To discretize 
% Then the backward Euler time-discrete solution refers to the sequence 

\subsection{Space-discretization: Continuous Galerkin finite elements}\label{sec:fem}
We consider the finite element method to discretize \eqref{eq: Galerkin formulation Richards} further in space.
Let $\mathcal{T}_h$ be a triangulation of $\Omega$ into closed $d$-simplices,  where $h:=\max_{E\in\mathcal{T}_h}\left(\mathrm{diam}(E)\right)$ denotes the mesh size. Assuming $\Omega$ is a polygon, the Galerkin finite element space is
\begin{equation}
	V_h=\left\{v_{h}\in H_{0}^{1}(\Omega)|\;v_{h|E} \in\mathcal{P}_p(E), \;T\in\mathcal{T}_h\right\},
\end{equation} 
where $\mathcal{P}_p(E)$ denotes the space of $p$-order polynomials on $E$, $p\in \N$. Then, the fully discrete Galerkin formulation of Richards' equation reads

\begin{definition}[Fully discrete solution of \eqref{eq: Richards equation}]
Let $\psi^0_h :=\psi^0\in L^2(\Om)$. Then the sequence $\{\psi^n_h\}_{n=1}^N\subset V_h$ is the fully discrete solution of \eqref{eq: Richards equation} if for all $n\in\left\{1,...,N\right\}$, and $v_h\in V_h$,
    \begin{equation}
	( \theta(\psi_h^{n})-\theta(\psi_h^{n-1}),v_h)+\tau( K(\theta(\psi_h^n))\nabla(\psi_h^n+z),\nabla v_h)=\tau( f^n,v_h).
	\label{eq: Fully discrete nonlinear Richards}
\end{equation}
\end{definition}

% Applying finite elements in space, an implicit Euler discretization in time of eq. (\ref{eq: Galerkin formulation Richards}) and denoting the discrete solution at time $n$ by $\psi_h^n$, the fully discrete Galerkin formulation of Richards' equation reads;  let  $\psi^{n-1}_h\in V_h$ be given, then find $\psi_h^n\in V_h$ such that

% for all $v_h\in V_h$.

\subsection{Iterative linearization schemes}
To obtain the solution of the nonlinear problem \eqref{eq: Fully discrete nonlinear Richards} an iterative linearization scheme is generally employed. To investigate the trade-off between the stability and speed of such schemes, we focus on two linearization strategies that will be representatives of linearly and quadratically convergent methods with convergence meant in the L$^2$ sense.

\subsubsection{Linearly convergent schemes: The L-scheme}\label{sec: Lin Scheme}

% There are multiple ways of dealing with the non-linearities in \eqref{eq: Fully discrete nonlinear Richards}. We will consider two linearization methods, the L-scheme and Newton's method. 

Where the quadratically convergent Newton method utilizes a proper first-order Taylor expansion of the nonlinear terms in \eqref{eq: Fully discrete nonlinear Richards}, the linearly convergent methods that we consider here, only exploit an expansion of the monotone components, i.e. the nonlinear saturation function. Moreover, the expansion does not need to be exact. Consider the following scheme: Given $\psi_h^{n-1},\psi_h^{n,j-1}\in V_h$, find $\psi_h^{n,j}\in V_h$ such that
\begin{align} 
&(\L(\psi_h^{n,j-1})(\psi_h^{n,j}-\psi_h^{n,j-1}),v_h) +\tau( K(\theta(\psi_h^{n,j-1}))\nabla(\psi_h^{n,j}+z),\nabla v_h) \nonumber
\\
&\qquad  =\tau( f^n,v_h) -( \theta(\psi_h^{n,j-1})-\theta(\psi_h^{n-1}),v_h), \label{eq: linear schemes}
\end{align}
for all $v_h\in V_h$, where $\L:\R\to [0,\infty)$ is a predetermined positive weight function, and $j\in \mathbb{N}$ is the iteration index. Observe that, provided $\k_m>0$ in Assumption~\ref{as:auxiliary_functions}, the problem above is linear, monotone, and Lipschitz  with respect to $\psi^{n,j}_h$, and hence a unique weak solution of \eqref{eq: linear schemes} exists. Moreover, if the iteration converges, i.e. if $\psi^{n,j}_h\to \psi^n_h$ strongly in $H^1_0(\Om)$, then $\psi^n_h$ indeed solves \eqref{eq: Fully discrete nonlinear Richards}. There can be many different choices of the function $\L$ which leads to different linearization schemes, see \Cref{tab:linear schemes}. For the rest of this paper, we mainly focus on the case when $\L$ is constant which leads to the widely studied L-scheme. 

%Paper Both2017Biot: https://www.sciencedirect.com/science/article/pii/S0893965917300034
%The L-scheme utilizes the monotonicity of the saturation function.

\begin{definition}[L-scheme]
   Let $\psi_h^{n-1},\psi_h^{n,0}\in L^2(\Om)$ and $L>0$ be given. Then the L-scheme solves for the sequence $\{\psi_h^{n,j}\}_{j\in \N}\subset V_h$ which satisfies for all iteration indices $j\in \N$, and $v_h\in V_h$
   \begin{equation}\label{eq: L-scheme}
	\begin{aligned}
		&L((\psi_h^{n,j}-\psi_h^{n,j-1}),v_h) +\tau( K(\theta(\psi_h^{n,j-1}))\nabla(\psi_h^{n,j}+z),\nabla v_h)\\
		&=\tau( f^n,v_h) -( \theta(\psi_h^{n,j-1})-\theta(\psi_h^{n-1}),v_h).
	\end{aligned}
\end{equation}\label{def: L-scheme}
\end{definition}

Different choices of $\L$ and the resulting schemes are listed below
\begin{table}[H]
    \centering
    \begin{tabular}{|l|c|}\hline 
         Scheme & $\L(\psi)$  \\[.5em]
         \hline
                  Picard & $0$ \\[.5em]
         Modified Picard \cite{celia_Picard} & $\theta'(\psi)$ \\[.5em]
         J\"ager-Kac\v{u}r \cite{jager1995solution} & $\sup_{\xi\in \R}\frac{\theta(\xi)-\theta(\psi)}{\xi-\psi}$\\[.5em]
         L-scheme \cite{PopI.S.2004Mfef,SlodickaM2002Arae,ListFlorian2016Asoi} & $L>0$ constant\\[.5em]
         Modified L-scheme \cite{MitraK.2019AmLt} & $\theta'(\psi) + M\t$,  $M>0$ constant\\\hline
    \end{tabular}
    \caption{Different linearly convergent schemes \eqref{eq: linear schemes} defined along with their linearization weight function $\L$.}
    \label{tab:linear schemes}
\end{table}

\begin{remark}[Non-constant $L$ for heterogeneous media]
    For the L-scheme, $L$ might not necessarily be a constant, but can be a function of the spatial variable $\bm{x}$. This would be typically the case for heterogeneous media. All the proofs can be adapted to include a spatially dependent $L$, see \cite{both2017etal} where this was done for a splitting scheme for Biot equations.
\end{remark}

It has been shown in \cite[Theorem 1]{ListFlorian2016Asoi} that if $L\geq \frac{1}{2}\sup_{\xi\in \R}\theta'(\xi)$, then the L-scheme iterations converge irrespective of the initial guess
under minor restrictions on the time step size $\t$  and independent of the mesh size. 
%This property is also shared by the J\"ager-Kac\v{u}r scheme \cite{Jager1991,jager1995solution} although the associated $\L$ function  is quite expensive to compute in this scheme. 
However, numerical results in \cite{ListFlorian2016Asoi,MitraK.2019AmLt} reveal that the  convergence of the L-scheme can be relatively slow, depending on the choice of the stabilization parameter $L$, see please the Appendix A for an adaptive L-scheme. One can enhance the convergence speed by computing $L$ using the previous iterates and derivatives. In general, taking $L$ as the Jacobian matrix, would lead to Newton method, this is the reason one can interpret the L-scheme also as a modified Newton method. This is exploited in the modified Picard scheme, first proposed in \cite{celia_Picard}, uses $\L(\psi^{n,j-1})=\theta'(\psi^{n,j-1})$, complying with the first-order Taylor series expansion $\theta(\psi^{n,j})\approx \theta(\psi^{n,j-1}) + \theta'(\psi^{n,j-1})(\psi^{n,j}-\psi^{n,j-1})$. As a result, if converging it requires fewer iterations compared to the L-scheme although the convergence is still linear. Nevertheless, this choice of the $\L$ function may lead to divergence of the scheme for larger time step sizes, as predicted in \cite{radu2006convergence} and observed numerically in \cite{ListFlorian2016Asoi,MitraK.2019AmLt}. In an attempt to resolve this issue, a modified L-scheme was proposed in \cite{MitraK.2019AmLt} that inherits the characteristics of both the L-scheme (except that it is using derivatives and the linear systems are not necessarily well conditioned) and the Picard scheme. The modified L-scheme exhibits increased stability compared to the Picard scheme while retaining its speed. However, the modified L-scheme converges unconditionally under the additional restriction that $\psi^{n,0}_h=\psi^{n-1}_h$ and the discrete time-derivative $(\psi^{n}_h-\psi^{n-1}_h)/\t$ is in $L^\infty(\Om)$. Since the objective of this paper is to start the linearization iterations with a stable scheme, and then switch to a quadratically converging scheme when its convergence can be guaranteed, the rest of the study will be with respect to the L-scheme which is arguably the most stable among the schemes presented in \Cref{tab:linear schemes} and the cheapest in terms of computing time per iteration (due to well-conditioned linear systems and not involving derivatives). Nonetheless, we remark that our methodology generalizes to all other linearly converging iterative methods.

\begin{remark}[Generality of the results]
    Although the analysis of \Cref{sec: a-posteriori} primarily focuses on the switching between L-scheme and the Newton method, the same techniques can be directly extended to cover switching between the schemes in \Cref{tab:linear schemes} and Newton. Moreover, the $L$-adaptive strategy in Appendix \ref{sec: LtoL} can be extended to the modified L-scheme (see \Cref{tab:linear schemes}) to select the parameter $M>0$ adaptively. 
    %In this work, we focus on L/N switching, since these two schemes are the most contrasting in terms of stability and convergence speed. 
\end{remark}

% Let $j\in\mathbb{N}$ be the iteration step. 
% The L-scheme is let $\psi_h^{n-1},\psi_h^{n,j-1}\in V_h$ and $L>0$ be given, then find $\psi_h^{n,j}$ such that for all $v_h\in V_h$.

 % And the dual norm is
% \begin{equation}
% 	\norm{\zeta}_{L,\psi_h^{n,j-1},-1}:=\sup_{\varphi\in H_0^1(\Omega),\norm{\varphi}_{L,\psi_h^{n,j-1},1}=1}( \zeta,\varphi)
% \end{equation}
% for $\zeta\in H^{-1}(\Omega)$.

\subsubsection{Quadratically convergent scheme: The Newton method}
The Newton method uses the first order Taylor series expansions of all the nonlinear functions in \eqref{eq: Richards equation} to ensure quadratic rates of convergence.

\begin{definition}[The Newton method]
   Let $\psi_h^{n-1},\psi_h^{n,0}\in L^2(\Om)$ be given. Then the Newton method solves for the sequence $\{\psi_h^{n,j}\}_{j\in \N}\subset V_h$ which satisfies for all iteration indices $j\in \N$, and $v_h\in V_h$
    \begin{equation}\label{eq: Newtons' method}
	\begin{aligned}
		&(\theta'(\psi_h^{n,j-1})(\psi_h^{n,j}-\psi_h^{n,j-1}),v_h)+\tau( K(\theta(\psi_h^{n,j-1}))\nabla(\psi_h^{n,j-1}+z),\nabla v_h) 
		\\& \quad +\tau\left( (K\circ \theta)'(\psi_h^{n,j-1})\nabla(\psi_h^{n,j-1}+z)(\psi_h^{n,j}-\psi_h^{n,j-1}),\nabla v_h\right)\\
  &\quad =\tau( f^n,v_h)-( \theta(\psi_h^{n,j-1})-\theta(\psi_h^{n-1}),v_h). 
	\end{aligned}
\end{equation}\label{def: Newton}
\end{definition}
However, this comes at the cost of decreased numerical stability as discussed in \Cref{sec: intro}. In the next section we combine the L-scheme and the Newton method in a consistent manner in order to obtain a linerization strategy that is both stable and fast.

\section{A posteriori estimate based adaptive switching between L-scheme and Newton}\label{sec: a-posteriori}

In this section, we develop the switching algorithm between L-scheme and the Newton method using {\it a posteriori} error analysis. For comparing the  errors between different linearization schemes we introduce a uniform notion of linearization errors $\eta_{\rm lin}$ in \Cref{sec:LinErr} based on arguments in \cite{Mitra2022lin}. The idea behind the adaptive algorithm is to start with the L-scheme and derive an estimator $\eta_{\LN}$ in \Cref{sec: LtoN} that predicts from the $j^{\rm th}$ and $(j-1)^{\rm th}$ iterate the linearization error for the next iteration if done using the Newton scheme. If the error is predicted to  decrease, then the iteration switches to Newton. Then another estimator $\eta_{\NL}$ is derived in \Cref{sec: NtoN} which predicts the  linearization error of the next step of the Newton iteration. The algorithm switches back to the L-scheme in case the error is predicted to increase. In fact, we go one step further in Appendix \ref{sec: LtoL} and derive an estimator $\eta_{\LL}$ to predict if the L-scheme itself will converge and to tune the value of $L$ accordingly. Finally, the full algorithm is laid out in \Cref{sec: algo} based on these estimators.

\tikzstyle{terminator} = [rectangle, draw, text centered, rounded corners, minimum height=2em]
\tikzstyle{connector} = [draw, -latex']
\tikzstyle{decision} = [ diamond, draw, text width=3.5em, rounded corners, text centered, node distance=3cm]
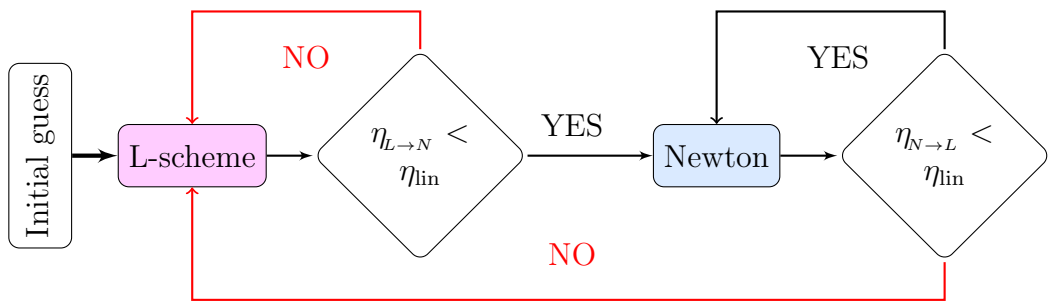
\begin{figure}[H]
\centering
\begin{tikzpicture}[node distance = 3.9cm]

	\node [terminator, fill=colorL1!20] at (2,0) (Lscheme) {L-scheme};
         \node [terminator, rotate=90] (pre) {Initial guess};
	\node [decision, right of=Lscheme, scale=1] at (2,0) (LtoN) {$\eta_{\LN} <\eta_{\rm lin}$};
	\node [terminator, fill=colorNewton!20, right of=LtoN] (Newton) {Newton};
	\node [decision, right of=Newton, scale=1] (NtoL) {$\eta_{\NL}<\eta_{\rm lin}$};
	\node[draw=none,red] at (3.5, 1.3) (no) {NO};
	\node[draw=none] at (7, 0.4) (yes) {YES};
	\node[draw=none] at (10.5, 1.3) (yes2) {YES};
	\node[draw=none,red] at (7, -1.3) (no2) {NO};
	\path [thick,connector] (Lscheme) -- (LtoN);
	\path [thick,connector] (LtoN) -- (Newton);
	\path [thick,connector] (Newton) -- (NtoL);
        \path [ultra thick,connector] (pre)--(Lscheme);
	\draw[thick, ->] (NtoL.north) -- ++(0,0.5)  -| (Newton.north);
	\draw[thick, ->,red] (LtoN.north) -- ++(0,0.5)  -| (Lscheme.north);
	\draw[thick, ->,red] (NtoL.south) -- ++(0,-0.5)  -| (Lscheme.south);
	%\path [connector] (NtoL.south) -| ++(0,-0.5) |-  (Newton.south);
	%\path [line] (NtoL) edge[loop right] ();
\end{tikzpicture}
\caption{Flowchart of Adaptive switching algortihm between L-scheme and Newton's method.}
\end{figure}

\subsection{Linearization errors and iteration-dependent energy norms}\label{sec:LinErr}
In \cite{Mitra2022lin} it is shown that the total numerical error corresponding to a finite element-based linearization scheme can be orthogonally decomposed into a discretization component and a linearization component if the errors are computed using an iteration-dependent energy norm (for linearly convergent schemes in \Cref{tab:linear schemes} this is just the energy norm invoked by the symmetric bilinear form associated with the unknown $\psi^{n,j}_h$ in \eqref{eq: linear schemes}).  Here, we are only interested in the linearization component which is defined as the difference between successive iterates in the aforementioned energy norm, i.e.,
\begin{align}\label{eq: linearization error}
    \eta_{\rm lin}^j:= \norm{\psi^{n,j}_h-\psi^{n,j-1}_h}_{\L,\psi^{n,j-1}_h},
\end{align}
where $\norm{\cdot}_{\L,\psi^{n,j-1}_h}$ represents the particular  $H^1$ equivalent-norm defined using the iterate $\psi^{n,j-1}_h$ and associated with the linearization scheme denoted by $\L$. The fully computable estimator $\eta_{\rm lin}^j$ encapsulates the entirety of the linearization error, as shown in Section 5 of \cite{Mitra2022lin}, and hence, will be used as its sole measure in the subsequent sections. We mention explicitly the energy norms of the two schemes that are discussed: With reference to \Cref{def: L-scheme}, the energy norm for L-scheme is defined as 
\begin{equation}\label{eq:energy norm L}
		\norm{\xi}_{L,\psi_h^{n,j-1}}:=\left(\int_\Omega L\xi^2+\t \left|K(\theta(\psi_h^{n,j-1}))^{\frac{1}{2}}\nabla\xi\right|^2\right)^\frac12
\end{equation}
 for all $\xi\in H_0^1(\Omega)$,  and with reference to \Cref{def: Newton} the norm for the Newton method is
	\begin{equation}\label{eq:energy norm Newton}
		\norm{\xi}_{N,\psi_h^{n,j-1}}:=\left(\int_\Omega \theta'(\psi_h^{n,j-1})\, \xi^2+\tau |K(\theta(\psi_h^{n,j-1}))^{\frac12}\nabla\xi|^2\right)^\frac12.
	\end{equation}

\subsection{L-scheme to Newton switching estimate}\label{sec: LtoN}
For some $i\in \N$, let the sequence $\{\psi^{n,j}_h\}_{j=1}^i\subset V_h$ be obtained using the L-scheme \eqref{eq: L-scheme}, and in the $(i+1)^{\rm th}$-iteration we want to test for switching to the Newton scheme. Let $\tilde{\psi}^{n,i+1}_h\in V_h$ be the solution of the Newton scheme \eqref{eq: Newtons' method} having $\psi^{n,i}_h$ as the previous iterate. 
In this section, we will assume the following:
\begin{assumption}[Convection term is not dominant]\label{as:convection}
For a given $i\in \N$, there exists a constant $C^i_N\in [0,2)$ such that
\begin{align}\label{eq:convection}
    \t |K(\theta(\psi_h^{n,i}))^{-\frac{1}{2}}(K\circ\theta)'(\psi_h^{n,i})\nabla(\psi_h^{n,i}+z)|^2 \leq (C_N^i)^2 \theta'(\psi^{n,i}_h),
\end{align}
a.e. in $\Om$.
\end{assumption}

The assumption above is also required to show the coercivity of the linear problem  \eqref{eq: Newtons' method} for $j=i+1$, and hence, to show the existence of solution $\Tilde{\psi}^{n,i+1}_h$. Observe that, since $\psi^{n,i}_h$ is known, the constant $C_N^i$ is fully computable. Additionally, it is smaller than 2  if the numerical flux is bounded, and $\t$ is small. Notably, the estimate holds even in the degenerate case when $\theta'(\psi^{n,i}_h)=0$, since the left-hand side has $(\theta'(\psi^{n,i}_h))^2$. 
To cover the degenerate case, we also introduce the concept of an equilibrated flux.

\begin{definition}[Equilibrated flux $\efl$ for degenerate regions]
For a pre-determined $\eps>0$, let  $\calT_{\rm deg}^{i,\eps}:= \{K\in \calT_h: \inf \theta'(\psi^{n,i}_h)<\eps \text{ in } K \}$. Let $\Pi_{h}: L^2(\Om)\to {\cal P}_p(\calT_h) $ be the $\mathcal{P}_p$ projection operator, i.e. $(\Pi_h u,v_h)=(u,v_h)$ 
 for all $u\in L^2(\Om)$ and $v_h\in {\cal P}_p(\calT_h)$. Moreover, let $\bm{{\rm RT}}_p(\calT_h)$ be the $p^{\rm th}$-order Raviart-Thomas space on $\calT_h$, i.e., $\bm{\sigma}\in \bm{{\rm RT}}_p(\calT_h)$ implies $\bm{\sigma}|_K\in  ({\cal P}_p(K))^d + \bm{x} {\cal P}_p(K)$ for all $K\in \calT_h$.
    Then, we define $\efl \in \bm{{\rm RT}}_p(\calT_h)\,\cap\, \bm{H}({\rm div},\Om)$ as
    \begin{align}\label{eq:equilibrated flux L}
        \del \cdot \efl=       \begin{cases}
            \frac{1}{\t}\Pi_{h} (L(\psi^{n,i}_h-\psi^{n,i-1}_h)-(\theta(\psi^{n,i}_h)-\theta(\psi^{n,i-1}_h))) &\text{ in } \calT_{\rm deg}^{i,\eps},\\
            0 &\text{ otherwise }.
        \end{cases}     
    \end{align}\label{def:equilibrated flux} 
\end{definition}

We defer to Appendix \ref{sec: eq flux} for discussions on how to compute $\efl$ in practice. Then, we have the following result.

\begin{proposition}[Error control of L-scheme to Newton switching step]\label{prop:LtoN}
For a given $\psi^{n,0}_h,\, \psi^{n-1}_h\in V_h$, let $\{\psi^{n,j}_h\}^{i}_{j=1}\subset V_h$ solve \eqref{eq: L-scheme} for some $i\in \mathbb{N}$. Let $\Tilde{\psi}^{n,i+1}_h\in V_h$ be the solution of \eqref{eq: Newtons' method} with the previous iterate $\psi^{n,i}_h$. Recall \Cref{def:equilibrated flux}.  Then, under the Assumptions \ref{as:auxiliary_functions}--\ref{as:convection}, one has
$$
\norm{\Tilde{\psi}^{n,i+1}_h-\psi^{n,i}_h}_{N,\psi^{n,i}_h}\leq \eta_{\LN}^i,
$$
where,
$$
\eta_{\LN}^i := \tfrac{2}{2-C_N^i}\left(\left[\eta_{\LN}^{i, \rm poten}\right]^2 + \t \left[\eta_{\LN,2}^{i, \rm flux}\right]^2\right)^{\frac{1}{2}}
$$
with
\begin{align*}
\eta_{\LN}^{i, \rm poten}&:= \left\|\theta'(\psi^{n,i}_h)^{-\frac{1}{2}}\left(L\left(\psi^{n,i}_h-\psi^{n,i-1}_h\right)-\left(\theta(\psi^{n,i}_h)-\theta(\psi^{n,i-1}_h)\right)\right)\right\|_{\calT_h\setminus \calT_{\rm deg}^{i,\eps}},\\
\eta_{\LN}^{i, \rm flux}&:= \left\|K(\theta(\psi_h^{n,i}))^{-\frac{1}{2}} \left[\left(K(\theta(\psi_h^{n,i}))-K(\theta(\psi_h^{n,i-1}))\right)\nabla\left( \psi^{n,i}_h+z\right) + \bm{\sigma}^i_L\right]\right\|.
\end{align*}
\end{proposition}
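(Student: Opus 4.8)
\emph{Proof plan.} Set $e := \Tilde{\psi}^{n,i+1}_h-\psi^{n,i}_h\in H^1_0(\Om)$; the target quantity is $\norm{e}_{N,\psi^{n,i}_h}$. The plan is to first derive an energy identity for $e$. I would test the Newton equation \eqref{eq: Newtons' method} for $\Tilde{\psi}^{n,i+1}_h$ (with previous iterate $\psi^{n,i}_h$) against $v_h=e$, and subtract the L-scheme equation \eqref{eq: L-scheme} satisfied by $\psi^{n,i}_h$ (previous iterate $\psi^{n,i-1}_h$), also tested against $e$. The source terms $\t(f^n,e)$ cancel, the diffusion contributions in $\nabla e$ together with the reaction term $(\theta'(\psi^{n,i}_h)e,e)$ reassemble into $\norm{e}_{N,\psi^{n,i}_h}^2$, and writing $R_\theta := L(\psi^{n,i}_h-\psi^{n,i-1}_h)-(\theta(\psi^{n,i}_h)-\theta(\psi^{n,i-1}_h))$ and $R_K := (K(\theta(\psi^{n,i}_h))-K(\theta(\psi^{n,i-1}_h)))\nabla(\psi^{n,i}_h+z)$, I expect to obtain
\begin{align*}
\norm{e}_{N,\psi^{n,i}_h}^2 + \t\big((K\circ\theta)'(\psi^{n,i}_h)\nabla(\psi^{n,i}_h+z)\,e,\nabla e\big) = (R_\theta,e) - \t(R_K,\nabla e).
\end{align*}

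Next I would dispose of the convection term. Factoring $K(\theta(\psi^{n,i}_h))^{1/2}K(\theta(\psi^{n,i}_h))^{-1/2}$ inside the pairing and applying pointwise Cauchy--Schwarz bounds its integrand by $|K^{-1/2}(K\circ\theta)'\nabla(\psi^{n,i}_h+z)|\,|K^{1/2}\nabla e|\,|e|$; Assumption~\ref{as:convection} replaces $\sqrt\t\,|K^{-1/2}(K\circ\theta)'\nabla(\psi^{n,i}_h+z)|$ by $C_N^i\,\theta'(\psi^{n,i}_h)^{1/2}$, and Young's inequality then yields the absorbable bound $\tfrac{C_N^i}{2}\norm{e}_{N,\psi^{n,i}_h}^2$. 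Since $C_N^i<2$ by hypothesis, moving this to the left keeps a positive prefactor and gives
\begin{align*}
\tfrac{2-C_N^i}{2}\,\norm{e}_{N,\psi^{n,i}_h}^2 \leq (R_\theta,e) - \t(R_K,\nabla e).
\end{align*}

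The crux is the right-hand side, in particular $(R_\theta,e)$ where $\theta'(\psi^{n,i}_h)$ may vanish. I would split the $R_\theta$ integral over $\calT_h\setminus\calT_{\rm deg}^{i,\eps}$ and $\calT_{\rm deg}^{i,\eps}$. On the non-degenerate part $\theta'(\psi^{n,i}_h)\geq\eps$, so inserting $\theta'^{1/2}\theta'^{-1/2}$ and using Cauchy--Schwarz produces $\eta_{\LN}^{i,\rm poten}\,\|\theta'(\psi^{n,i}_h)^{1/2}e\|_{\calT_h\setminus\calT_{\rm deg}^{i,\eps}}$. On the degenerate part, where dividing by $\theta'$ is impossible, I would instead invoke the equilibrated flux: since $e$ restricted to $\calT_{\rm deg}^{i,\eps}$ lies in $\calP_p$, the defining property of $\Pi_h$ gives $(R_\theta,e)_{\calT_{\rm deg}^{i,\eps}}=(\Pi_h R_\theta,e)_{\calT_{\rm deg}^{i,\eps}}=\t(\del\cdot\efl,e)$ by \Cref{def:equilibrated flux}. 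Integrating by parts, using $e\in H^1_0(\Om)$ to kill the boundary term and $\del\cdot\efl=0$ outside $\calT_{\rm deg}^{i,\eps}$, converts this into $-\t(\efl,\nabla e)_\Om$. Combined with $-\t(R_K,\nabla e)$ it becomes $-\t(R_K+\efl,\nabla e)$, which I would estimate by inserting $K^{1/2}K^{-1/2}$ and applying Cauchy--Schwarz to obtain $\sqrt\t\,\eta_{\LN}^{i,\rm flux}\,\sqrt\t\,\|K(\theta(\psi^{n,i}_h))^{1/2}\nabla e\|$.

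Finally, since $\|\theta'(\psi^{n,i}_h)^{1/2}e\|^2$ and $\t\|K(\theta(\psi^{n,i}_h))^{1/2}\nabla e\|^2$ are precisely the two summands of $\norm{e}_{N,\psi^{n,i}_h}^2$, a Cauchy--Schwarz on the resulting two-term sum gives $(R_\theta,e)-\t(R_K,\nabla e)\leq ([\eta_{\LN}^{i,\rm poten}]^2+\t[\eta_{\LN}^{i,\rm flux}]^2)^{1/2}\,\norm{e}_{N,\psi^{n,i}_h}$; dividing through by $\norm{e}_{N,\psi^{n,i}_h}$ (the case $e=0$ being trivial) and by $\tfrac{2-C_N^i}{2}$ produces exactly $\eta_{\LN}^i$. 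The main obstacle is the degenerate term $(R_\theta,e)_{\calT_{\rm deg}^{i,\eps}}$: the entire purpose of $\efl$ is to reroute it, via its divergence constraint and integration by parts, out of the possibly vanishing $\theta'$-weighted part of the Newton norm and into the $K$-weighted diffusion part, which remains controllable.
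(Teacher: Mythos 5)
Your proposal is correct and follows essentially the same route as the paper's proof: the same energy identity obtained by testing the Newton step with $e$ and eliminating the data terms via the L-scheme equation, the same absorption of the convection term through Assumption~\ref{as:convection} and Young's inequality, the same split of the potential residual over $\calT_h\setminus\calT_{\rm deg}^{i,\eps}$ versus $\calT_{\rm deg}^{i,\eps}$ with the equilibrated flux rerouting the degenerate contribution into the $K$-weighted gradient term via the projection property and integration by parts, and the same final Cauchy--Schwarz assembly. No gaps.
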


% \begin{remark}[Fully computable adaptive condition]
% Observe that $\eta_{\rm lin}^i$ is fully computable using the L-scheme iterations. Hence, if $\eta_{\rm lin}^i<1$, then one should switch to the Newton scheme since the estimate reduces to a contraction result in this case albeit in a different energy norm.
% \end{remark}

\begin{proof}
Observe from \eqref{eq: Newtons' method} that  $\d \psi^{i+1}_h:= \Tilde{\psi}^{n,i+1}_h-\psi^{n,i}_h\in V_h$ satisfies
\begin{align}\label{eq:NewtonIth}
		&(\theta'(\psi_h^{n,i})\d \psi^{i+1}_h,v_h)+\tau( K(\theta(\psi_h^{n,i}))\nabla \d \psi^{i+1}_h,\nabla v_h) \nonumber
		\\& \quad +\tau\left( (K\circ\theta)'(\psi_h^{n,i})\nabla(\psi_h^{n,i}+z)\,\d\psi^{i+1}_h,\nabla v_h\right)\nonumber\\
  &\quad =\tau( f^n,v_h)-( \theta(\psi_h^{n,i})-\theta(\psi_h^{n-1}),v_h) -\tau( K(\theta(\psi_h^{n,i}))\nabla \psi^{i}_h,\nabla v_h), 
\end{align}
for all $v_h\in V_h$.
    Inserting the test function $v_h= \d \psi^{i+1}_h$ in \eqref{eq:NewtonIth}, one has
    \begin{subequations}
\begin{align}
    &\norm{\d \psi^{i+1}_h}_{N,\psi_h^{n,i}}^2 \overset{ \eqref{eq:energy norm Newton}}= \int_\Omega \left (\theta'(\psi_h^{n,i})|\d \psi^{i+1}_h|^2+\tau | K(\theta(\psi_h^{n,i}))^{\frac{1}{2}}\nabla \d \psi^{i+1}_h |^2 \right)\nonumber\\
    &\;\;\overset{\eqref{eq:NewtonIth}}=\underbrace{-\tau\left( (K\circ\theta)'(\psi_h^{n,i})\nabla(\psi_h^{n,i}+z)\,\d\psi^{i+1}_h,\nabla \d \psi^{i+1}_h\right)}_{=: T_1}\nonumber \\
    &\quad+ \underbrace{\tau( f^n,\d \psi^{i+1}_h)-( \theta(\psi_h^{n,i})-\theta(\psi_h^{n-1}),\d \psi^{i+1}_h) -\tau( K(\theta(\psi_h^{n,i}))\nabla( \psi^{n,i}_h+z),\nabla \d \psi^{i+1}_h)}_{=:T_2}.
\end{align}
Calling $\bm{\sigma}^i= (K\circ\theta)'(\psi_h^{n,i})\nabla(\psi_h^{n,i}+z)$ for brevity, we estimate that
\begin{align}\label{eq:estimate T1}
    T_1&:= -\t(\bm{\sigma}^i\d\psi^{i+1}_h,\nabla \d \psi^{i+1}_h)\nonumber\\
    &\leq  \left(\t\int_\Om |K(\theta(\psi_h^{n,i}))^{-\frac{1}{2}} \bm{\sigma}^i|^2 (\d\psi^{i+1}_h)^2 \right)^{\frac{1}{2}} \left( \t\int_\Om|K(\theta(\psi_h^{n,i}))^{\frac{1}{2}} \nabla \d \psi^{i+1}_h|^2\right)^{\frac{1}{2}} \nonumber\\
    &\overset{\eqref{eq:convection}}\leq C_N^i \left(\int_\Om \theta'(\psi^{n,i}_h) (\d\psi^{i+1}_h)^2 \right)^{\frac{1}{2}} \left( \t\int_\Om|K(\theta(\psi_h^{n,i}))^{\frac{1}{2}} \nabla \d \psi^{i+1}_h|^2\right)^{\frac{1}{2}}\nonumber\\
    &\leq \frac{C_N^i}{2} \int_\Om \left( \theta'(\psi^{n,i}_h) |\d\psi^{i+1}_h|^2 + \t |K(\theta(\psi_h^{n,i}))^{\frac{1}{2}} \nabla \d \psi^{i+1}_h|^2 \right) \nonumber\\
    &= \frac{C_N^i}{2} \norm{\d\psi^{i+1}_h}^2_{N,\psi^{n,i}_h}.
\end{align}
For estimating the last term, we observe from the divergence theorem that
\begin{align*}
    &-(\efl,\del \d\psi^{i+1}_h)=(\del\cdot\efl,\d\psi^{i+1}_h) \\
    &\quad \overset{\eqref{eq:equilibrated flux}}=\tfrac{1}{\t}(\Pi_{h} (L(\psi^{n,i}_h-\psi^{n,i-1}_h)-(\theta(\psi^{n,i}_h)-\theta(\psi^{n,i-1}_h))),\d\psi^{i+1}_h)_{\calT_{\rm deg}^{i,\eps}}\\
    &\quad = \tfrac{1}{\t}(L(\psi^{n,i}_h-\psi^{n,i-1}_h)-(\theta(\psi^{n,i}_h)-\theta(\psi^{n,i-1}_h)),\d\psi^{i+1}_h)_{\calT_{\rm deg}^{i,\eps}}
\end{align*}
The last equality follows from the definition of the projection operator $\Pi_h$ and $\d \psi^{i+1}_h\in V_h\subset \calP_p(\calT_h)$.
Using this result, along with \eqref{eq: L-scheme} and $\d \psi^{i+1}_h\in V_h$, one has
\begin{align}
    T_2&:= \tau( f^n,\d \psi^{i+1}_h)-( \theta(\psi_h^{n,i})-\theta(\psi_h^{n-1}),\d \psi^{i+1}_h) -\tau( K(\theta(\psi_h^{n,i}))\nabla \psi^{i}_h,\nabla \d \psi^{i+1}_h)\nonumber\\
    &\overset{\eqref{eq: L-scheme} }= (L(\psi_h^{n,i}-\psi_h^{n,i-1})- (\theta(\psi_h^{n,i})-\theta(\psi_h^{n,i-1})),\d \psi^{i+1}_h)\nonumber\\
    &\qquad -\tau(( K(\theta(\psi_h^{n,i}))-K(\theta(\psi_h^{n,i-1})))\nabla( \psi^{n,i}_h+z),\nabla \d \psi^{i+1}_h)\nonumber\\
   &= (L(\psi_h^{n,i}-\psi_h^{n,i-1})- (\theta(\psi_h^{n,i})-\theta(\psi_h^{n,i-1})),\d \psi^{i+1}_h) +\t (\efl,\del \d\psi^{i+1}_h)\nonumber\\
    &\qquad -\tau(( K(\theta(\psi_h^{n,i}))-K(\theta(\psi_h^{n,i-1})))\nabla( \psi^{n,i}_h+z) + \efl,\nabla \d \psi^{i+1}_h)\nonumber\\
    &= (L(\psi_h^{n,i}-\psi_h^{n,i-1})- (\theta(\psi_h^{n,i})-\theta(\psi_h^{n,i-1})),\d \psi^{i+1}_h)_{\calT_h\setminus \calT_{\rm deg}^{i,\eps}}\nonumber\\
    &\qquad -\tau(( K(\theta(\psi_h^{n,i}))-K(\theta(\psi_h^{n,i-1})))\nabla( \psi^{n,i}_h+z) + \efl,\nabla \d \psi^{i+1}_h)\nonumber\\
   &\overset{\eqref{eq:equilibrated flux L}}\leq  (\theta'(\psi^{n,i}_h)^{-\frac{1}{2}}(L(\psi_h^{n,i}-\psi_h^{n,i-1})- (\theta(\psi_h^{n,i})-\theta(\psi_h^{n,i-1}))),\theta'(\psi^{n,i}_h)^{\frac{1}{2}}\d \psi^{i+1}_h)_{\calT_h\setminus \calT_{\rm deg}^{i,\eps}}\nonumber\\
   &\qquad + \t [\eta_{\LN}^{i, \rm flux}]\, \|K(\psi^{n,i}_h)^{\frac{1}{2}}\del \d \psi^{i+1}_h\| \nonumber\\
   &\leq [\eta_{\LN}^{i, \rm poten}]\cdot  \|\theta'(\psi^{n,i}_h)^{\frac{1}{2}}\d \psi^{i+1}_h\| + \sqrt{\t}\,[\eta_{\LN}^{i, \rm flux}]\cdot \sqrt{\t} \|K(\psi^{n,i}_h)^{\frac{1}{2}}\del \d \psi^{i+1}_h\|.
\end{align}\label{eq:LNswicth}
    \end{subequations}
    Combining \eqref{eq:LNswicth}, using the Cauchy-Schwarz inequality along with the definition of $\eta_{\LN}^i$, one has the estimate.
\end{proof}

\subsection{Newton to L-scheme switching estimate}\label{sec: NtoN}
Assuming that the L-scheme converges unconditionally, after switching to Newton we would want to switch back to the L-scheme only if linearization error of the Newton scheme increases with iterations. Similar to before, we can estimate if this is going to happen in the $(i+1)^{\rm th}$-step, purely from the iterates up to the $i^{\rm th}$-step. For this purpose, we introduce another equilibrated flux.

\begin{definition}[Equilibrated flux $\efl$ for degenerate regions (Newton scheme)]
Recalling \Cref{def:equilibrated flux}, we define $\efn \in \bm{{\rm RT}}_p(\calT_h)\,\cap\, \bm{H}({\rm div},\Om)$ as
    \begin{align}\label{eq:equilibrated flux}
        \del \cdot \efn=       \begin{cases}
            \frac{1}{\t}\Pi_{h} (\theta'(\psi^{n,i}_h)(\psi^{n,i}_h-\psi^{n,i-1}_h)-(\theta(\psi^{n,i}_h)-\theta(\psi^{n,i-1}_h))) &\text{ in } \calT_{\rm deg}^{i,\eps},\\
            0 &\text{ otherwise }.
        \end{cases}     
    \end{align}\label{def:equilibrated flux Newton} 
\end{definition}

The corresponding result mirroring \Cref{prop:LtoN} is

\begin{proposition}[Error control of Newton to Newton step]\label{prop:NtoN}
For a given $\psi^{n,0}_h,\, \psi^{n-1}_h\in V_h$, let $\{\psi^{n,j}_h\}^{i+1}_{j=1}\subset V_h$ solve \eqref{eq: Newtons' method} for some $i\in \mathbb{N}$. Then, under Assumptions \ref{as:auxiliary_functions}--\ref{as:convection}, one has
$$
\norm{\psi^{n,i+1}_h-\psi^{n,i}_h}_{N,\psi^{n,i}_h}\leq \eta_{\NL}^i,
$$
where $$\eta_{\NL}^i :=\tfrac{2}{(2-C_N^i)}\left([\eta_{\NL}^{i,{\rm poten}}]^2 + \t [\eta_{\NL}^{i,{\rm flux}}]^2\right)^{\frac{1}{2}}$$
with \begin{align*}
		&\eta_{\NL}^{i,{\rm poten}}:= \|\theta'(\psi^{n,i}_h)^{-\frac{1}{2}}(\theta'(\psi^{n,i-1}_h)(\psi^{n,i}_h-\psi^{n,i-1}_h)-(\theta(\psi^{n,i}_h)-\theta(\psi^{n,i-1}_h)))\|_{\calT_h\setminus \calT_{\rm deg}^{i,\eps}},\\
		&\eta_{\NL}^{i,{\rm flux}}:= \left\| \begin{matrix}
			\left[(K(\theta(\psi_h^{n,i}))-K(\theta(\psi_h^{n,i-1})))\del (\psi^{n,i}_h+z) 
   \phantom{AAAAAAAAAAA}\right.\\\left.- (K\circ\theta)'(\psi_h^{n,i-1})(\psi_h^{n,i}-\psi_h^{n,i-1})\del (\psi^{n,i-1}_h+z)
			)\right]K(\theta(\psi_h^{n,i}))^{-\frac{1}{2}}\\
		+ K(\theta(\psi_h^{n,i}))^{-\frac{1}{2}}\efn
		\end{matrix}   \right\|.
\end{align*}
\end{proposition}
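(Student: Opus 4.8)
The plan is to follow the proof of Proposition \ref{prop:LtoN} almost line by line, replacing the L-scheme identity \eqref{eq: L-scheme} at the previous iterate by the Newton identity \eqref{eq: Newtons' method} at $j=i$, and the flux $\efl$ by $\efn$. First I set $\d\psi^{i+1}_h := \psi^{n,i+1}_h-\psi^{n,i}_h\in V_h$ and observe that, since the $(i+1)$-th step is a Newton step centred at $\psi^{n,i}_h$, the equation governing $\d\psi^{i+1}_h$ is precisely \eqref{eq:NewtonIth} (which only references $\psi^{n,i}_h$, $\psi^{n-1}_h$, $f^n$, and is therefore insensitive to how $\psi^{n,i}_h$ was produced). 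Testing with $v_h=\d\psi^{i+1}_h$ yields $\norm{\d\psi^{i+1}_h}^2_{N,\psi^{n,i}_h}=T_1+T_2$ with exactly the same $T_1,T_2$ as before. Because $T_1$ involves only quantities at iterate $i$, the estimate \eqref{eq:estimate T1} transfers verbatim and gives $T_1\le \tfrac{C_N^i}{2}\norm{\d\psi^{i+1}_h}^2_{N,\psi^{n,i}_h}$ under Assumption \ref{as:convection}.

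The one genuinely new computation is the reorganisation of $T_2=\t(f^n,\d\psi^{i+1}_h)-(\theta(\psi^{n,i}_h)-\theta(\psi^{n-1}_h),\d\psi^{i+1}_h)-\t(K(\theta(\psi^{n,i}_h))\nabla(\psi^{n,i}_h+z),\nabla\d\psi^{i+1}_h)$. Instead of \eqref{eq: L-scheme} I eliminate $\t(f^n,\d\psi^{i+1}_h)$ using \eqref{eq: Newtons' method} at $j=i$, tested against $\d\psi^{i+1}_h$. The mass contributions then combine to $(\theta'(\psi^{n,i-1}_h)(\psi^{n,i}_h-\psi^{n,i-1}_h)-(\theta(\psi^{n,i}_h)-\theta(\psi^{n,i-1}_h)),\d\psi^{i+1}_h)$, matching the integrand of $\eta_{\NL}^{i,\rm poten}$, while the diffusion contributions collapse to $-\t\big((K(\theta(\psi^{n,i}_h))-K(\theta(\psi^{n,i-1}_h)))\nabla(\psi^{n,i}_h+z)-(K\circ\theta)'(\psi^{n,i-1}_h)(\psi^{n,i}_h-\psi^{n,i-1}_h)\nabla(\psi^{n,i-1}_h+z),\nabla\d\psi^{i+1}_h\big)$, in which the second summand is exactly the extra Newton-correction term that distinguishes $\eta_{\NL}^{i,\rm flux}$ from $\eta_{\LN}^{i,\rm flux}$.

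From here the remaining steps are identical to Proposition \ref{prop:LtoN}. I insert $\efn$ by adding and subtracting $\t(\efn,\nabla\d\psi^{i+1}_h)$, invoke the divergence theorem together with Definition \ref{def:equilibrated flux Newton} and the projection identity for $\Pi_h$ (using $\d\psi^{i+1}_h\in V_h\subset\calP_p(\calT_h)$) to trade the mass contribution on $\calT_{\rm deg}^{i,\eps}$ for the flux, leaving the potential term on $\calT_h\setminus\calT_{\rm deg}^{i,\eps}$. Splitting $T_2$ by Cauchy-Schwarz in the weights $\theta'(\psi^{n,i}_h)^{1/2}$ and $\t^{1/2}K(\theta(\psi^{n,i}_h))^{1/2}$ gives $T_2\le ([\eta_{\NL}^{i,\rm poten}]^2+\t[\eta_{\NL}^{i,\rm flux}]^2)^{1/2}\norm{\d\psi^{i+1}_h}_{N,\psi^{n,i}_h}$. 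Combining with the $T_1$ bound, dividing through by $\norm{\d\psi^{i+1}_h}_{N,\psi^{n,i}_h}$ and by $1-C_N^i/2$ produces the factor $\tfrac{2}{2-C_N^i}$ and closes the estimate.

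The main obstacle I anticipate is purely algebraic: verifying that the base flux $K(\theta(\psi^{n,i-1}_h))\nabla(\psi^{n,i}_h+z)$ arising from the step-$i$ Newton relation cancels against $K(\theta(\psi^{n,i}_h))\nabla(\psi^{n,i}_h+z)$ up to the permeability increment, with no stray $K(\theta(\psi^{n,i-1}_h))\nabla(\psi^{n,i}_h-\psi^{n,i-1}_h)$ term surviving. This cancellation hinges on the Newton diffusion being evaluated at the current gradient $\nabla(\psi^{n,\cdot}_h+z)$ (as used in \eqref{eq:NewtonIth}); a gradient-index or sign slip here would corrupt $\eta_{\NL}^{i,\rm flux}$, so this is the step I would carry out most carefully. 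A secondary point of care is ensuring the divergence of $\efn$ is matched to the actual mass integrand on $\calT_{\rm deg}^{i,\eps}$, so that the degenerate-region contribution is absorbed cleanly before the final Cauchy-Schwarz split.
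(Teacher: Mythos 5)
Your proposal is correct and is exactly the adaptation the paper intends: the paper omits this proof with the remark that it is identical to that of Proposition~\ref{prop:LtoN}, and you correctly identify and carry out the only genuinely new step, namely eliminating $\t(f^n,\d\psi^{i+1}_h)$ via the Newton relation at $j=i$ so that the mass residual becomes $\theta'(\psi^{n,i-1}_h)(\psi^{n,i}_h-\psi^{n,i-1}_h)-(\theta(\psi^{n,i}_h)-\theta(\psi^{n,i-1}_h))$ and the flux residual picks up the extra $(K\circ\theta)'$ correction term. Your closing caution about matching $\del\cdot\efn$ to the actual mass integrand on $\calT_{\rm deg}^{i,\eps}$ is well placed, since Definition~\ref{def:equilibrated flux Newton} as printed uses $\theta'(\psi^{n,i}_h)$ where the residual produces $\theta'(\psi^{n,i-1}_h)$.
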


The proof is identical to the proof of \Cref{prop:LtoN} and hence is left for the avid reader.

\begin{remark}[Effectivity of the estimators $\eta_{\LN}^i$ and $\eta_{\NL}^i$]\label{rem: effectivity}
    The estimators  $\eta_{\LN}^i$ and $\eta_{\NL}^i$ predict the linearization error $\eta_{\rm lin}^{i+1}$ of the $(i+1)^{\rm th}$ iteration if done using the Newton scheme \eqref{eq: Newtons' method}. In the cases where the iteration is done indeed using the Newton scheme, the sharpness of the estimate can be measured using the \textbf{effectivity index}, i.e., if $(i+1)^{\rm th}$ iteration is Newton then
\begin{align}\label{eq: eff ind}
    \text{(Eff. Ind.)}_i:=\begin{cases}
        \eta_{\LN}^i/\eta_{\rm lin}^{i+1} &\text{ if } i^{\rm th} \text{ iteration is L-scheme},\\
        \eta_{\NL}^i/\eta_{\rm lin}^{i+1} &\text{ if } i^{\rm th} \text{ iteration is Newton}.        
    \end{cases}
\end{align}
Observe that it is always greater than 1 due to \Cref{prop:LtoN,prop:NtoN} and an effectivity index close to 1 implies a sharp estimate. The estimators are expected to be quite accurate since mainly the Cauchy-Schwarz inequality is used to derive them, except for estimate \eqref{eq:estimate T1} where the term $T_1$ is bounded above using the global approximation in Assumption \ref{as:convection}. This expected sharpness is shown to be the case through the numerical experiments of \Cref{sec: numerics}, see in particular \Cref{fig:switching indictor ex1,fig:switching indictor ex2}
\end{remark}

\subsection{A-posteriori estimate based adaptive linearization algorithm}\label{sec: algo}
With the above estimates in mind, we propose a switching algorithm between the L-scheme and the Newton method.  The linearization scheme used at iteration $j=i+1$ should be Newton if the  linearization error, predicted by the estimators $\eta^i_{\LN}$ and $\eta^i_{\NL}$, is smaller than the linearization error $\eta_{\rm lin}^i$ of the $i^{\rm th}$ step,  see \eqref{eq: linearization error}. However, to optimize the  algorithm we take a few numerical considerations into account first.

\subsubsection{Computational considerations}

To speed up the computations of this switching criteria, we make a few more reductions
\begin{itemize}
    % \item \textbf{[Value of $C_N^i$]} We will use $C_N^i=0$ to speed-up the computations. 
    \item \textbf{[Equilibrated flux]}  If the saturated domain is much smaller than the unsaturated domain, then we take $\efl=\efn=0$.
    \item  \textbf{[Switching condition]}  The condition $\eta_{L\to N}^i \leq \eta_{\rm lin}^i$ might be difficult to satisfy if the estimators are not sharp (see \Cref{rem: effectivity}), and even when it is satisfied it might require large values of $i$. Hence, to expedite the switching between L-scheme and Newton, we will use the criteria $\eta_{L\to N}^i< C_{\rm tol} \,\eta_{\rm lin}^i$ for a constant $C_{\rm tol}>1$. 
    %Similar criteria will be used for the Newton to L-scheme switch.  
\end{itemize}

\subsubsection{Adaptive linearization algorithm}
 Under these considerations we propose the following adaptive algorithm:

\begin{algorithm}[H]
	\caption{L-scheme/Newton a-posteriori switching}\label{alg: a-posteriori}
	\begin{algorithmic}
		\Require $\boldsymbol{\psi}^{n,0}\in L^2(\Om)$ as initial guess. %and L=L_0
        \Ensure Scheme=\fbox{L-scheme}, $C_{\rm tol}=1.5$
	\For{i=1,2,..}
  \If{Scheme=\fbox{L-scheme}} 
  \State
   Compute iterate using  \fbox{L-scheme}, i.e., \eqref{eq: L-scheme}
        \If{$C_N^i\geq 2$}  \textbf{continue}.        
%        \ElsIf{$\eta_{\LL}^i\geq \eta^i_{\rm lin}$}
%		  \State Replace $L$ by $2L$ and \textbf{continue}.
		\ElsIf{$\eta_{\LN}^i\leq C_{\rm tol} \eta^i_{\rm lin}$}
		  \State Set Scheme=\fbox{Newton}
		\EndIf
  \Else 
  		\State \  Compute iterate using \fbox{Newton}, i.e., \eqref{eq: Newtons' method}
        \If{$\eta_{\NL}^i> \eta^i_{\rm lin}$}
				\State Set Scheme=\fbox{L-scheme}
			\EndIf
   \EndIf
		\EndFor

	\end{algorithmic}
\end{algorithm}

\begin{remark}[Combining L-scheme adaptivity]
    In Appendix \ref{sec: LtoL}, we further propose an algorithm to adaptively select $L$ in order to expedite the convergence of the L-scheme. This can directly be implemented in conjunction to \Cref{alg: a-posteriori} to improve the convergence speed of the composite scheme. Nevertheless, we have refrained from combining these schemes for the ease of presentation.
\end{remark}

\begin{remark}[Computational cost of the estimators]
    In the non-degenerate case, the quantities $C_N^i$, $\eta^i_{\LN}$ and $\eta^i_{\NL}$, can be directly computed from the iterates $\psi^{n,i}_h$ and $\psi^{n,i-1}_h$ by inserting $\efl=\efn=0$, see \Cref{prop:LtoN,prop:NtoN}. Hence, the cost of computing the estimators is small in comparison to the cost of the iterations. Since the L-scheme iterations are less expensive than the Newton iterations, the L/N scheme generally performs better or similarly to the Newton scheme time-wise. This is evident from the numerical experiments, e.g. see \Cref{fig: CPU time ex1}. In the degenerate case, global computation are required for computing $\efl$ and $\efn$ if they are used. We discuss the computation of these equilibrated fluxes in Appendix \ref{sec: eq flux} and their computation can be made relatively inexpensive by precomputing the associated stiffness matrices. The computational cost for the estimators can be reduced even further by evaluating them only for selected iterations. Nevertheless, we do not pursue this option for the sake of simplicity.
\end{remark}

% \todo[inline]{Add discussion about the expected strictness of the results. Overall only a few bounds had to be applied. I think the most severe ones are the final ones in all the analyses when introducing the $\eta$, in the case that there is a large local ratio/heterogeneity in the expressions. In some sense, such an 'a.e.' estimate introduces always a sort of worst-case analysis, to still guarantee stability. Which is in some sense very natural, but maybe could be fairly stated nevertheless.}

% \begin{proposition}[Stability of Algorithm \ref{alg: a-posteriori}]
%     Assume that a sequence $\{\psi^{n,j}_h\}_{j\in \N}\subset V_h$ generated by the L-scheme \eqref{eq: L-scheme} converges unconditionally irrespective of the initial guess $\psi^{n,0}_h\in L^2(\Om)$, i.e., $\norm{\psi^{n,j+1}_h-\psi^{n,j}_h}_{L,\psi^{n,j}_h}\leq \norm{\psi^{n,j}_h-\psi^{n,j-1}_h}_{L,\psi^{n,j-1}_h}$ for all $j\in \N$. Then, under the assukmkption 
% \end{proposition}

\section{Numerical results}\label{sec: numerics}

In this section, we perform several numerical examples that demonstrate the robustness and efficiency of the proposed algorithm for switching between Newton's method and the L-scheme. This is done through careful comparison between the switching algorithm, hereafter called the L/N-scheme, the standard Newton method and the L-scheme. It is important to note that the L-scheme includes a tuning parameter that significantly affects the performance of the method. As a remedy, we choose two different values, $L_1$ and $L_2$ in the performance comparison. Here, $L_1$ is a quasi-optimal choice of tuning parameter and will be defined for each specific subproblem, see Table~\ref{table: example parameters}, and $L_2=\sup\left\{\theta'\left(\psi\right)\right\}$. For the L/N-scheme, $L_1$ is always chosen for the L-scheme iterations.

To measure the performance of each separate method, we examine both the number of iterations and computational time that they require to satisfy the stopping criterion $$\norm{\psi_h^{n,j}-\psi_h^{n,j-1}}_{\L,\psi_h^{n,j-1}}<10^{-7},$$ where $\norm{\cdot}_{\L,\psi_h^{n,j-1}}$ is the iteration and linearization-dependent energy norm for the pressure head, with $\L\in \{L,N\}$. Here, the computational time covers the entire simulations and all experiments were performed on an Acer Swift 3, with an Intel core i7-1165G7-processor.   

In total, three different test cases for the numerical experiments are considered:
\begin{itemize}
    \item Test case 1: The first test case is taken from \cite{IllianoDavide2021Isfs}, although it is modified in the sense that we disregard surfactant transport. Here, the flow is always partially saturated.
    \item Test case 2: The second test case can be found in \cite{ListFlorian2016Asoi}, and it considers extraction/injection above the water table.
    \item Test case 3: The final test case is a known benchmark problem that is studied in \cite{ListFlorian2016Asoi,Knabner1987,Haverkamp1977,schneid2000hybrid}. Here, a time-dependent Dirichlet boundary condition is used to describe the recharge of a groundwater reservoir from a drainage trench.
\end{itemize} 

For all test cases, the van Genuchten-Mualem parametrization \cite{vanGenuchtenMTh1980ACEf} is used to describe the relation between the saturation, the pressure head and the permeability,
\begin{equation}\label{eq: Van Genutchen}
	\begin{aligned}
		\theta(\psi)&=\begin{cases}
			\theta_R+(\theta_S-\theta_R)\left[\frac{1}{1+(-\alpha\psi)^n}\right]^{\frac{n-1}{n}},\quad &\psi\leq 0,\\
			\theta_S,\quad&\psi>0,
		\end{cases}\\
		K(\Theta(\psi))&=\begin{cases}
			K_s\, (\Theta(\psi))^{\frac{1}{2}}\left[1-\left(1-\Theta(\psi)^{\frac{n}{n-1}}\right)^{\frac{n-1}{n}}\right]^2,\quad &\psi\leq 0,\\
			K_s,\quad &\psi>0.\\
		\end{cases}
	\end{aligned}
\end{equation}
Here,
\begin{equation*}
	\Theta(\psi)=\frac{\theta(\psi)-\theta_R}{\theta_S-\theta_R},
\end{equation*}
with $\theta_S$ and $\theta_R$ being the water volume and the residual water content respectively, $K_s$ the hydraulic conductivity of the fully saturated porous medium, and $\alpha$ and $n$ soil related parameters.

In all of the test-cases, triangular linear conforming finite elements with mesh diameter $h$ are applied together with the implicit Euler time-discretization with time step size $\tau$, as described in~\Cref{sec:fem,sec:euler}. The mesh diameter $h$ and time step size $\tau$ vary between the different experiments and will be specified for each individual experiment. We note that the numerical experiments are expected to perform equivalently for other spatial discretization methods such as the Raviart-Thomas mixed finite elements or discontinuous Galerkin finite elements.

The finite element implementation is Python based  and uses the simulation toolbox PorePy \cite{PorePyPaper} for grid management. It is available for download at \url{https://github.com/MrShuffle/RichardsEquation/releases/tag/v1.0.1}. 
% \begin{remark}
% The computation and evaluation of derivatives in Newton's method uses a symbolic toolbox \js{(SymPy)} \km{(which one?)}, although one could consider automatic differentiation to speed up computations.
% \end{remark}

% \todo[inline]{Need comment on whether the assumptions on the material parameters are met by the chosen laws and parameters.}

\begin{table}[H]
\setlength\tabcolsep{0pt} % let LaTeX figure out amount of inter-column whitespace
\begin{tabular*}{\textwidth}{@{\extracolsep{\fill}} l *{4}{d{2.8}} }
\toprule
 
 \multicolumn{2}{l}{Parameters}& \multicolumn{1}{c}{Test case 1} & \multicolumn{1}{c}{Test case 2 } & \multicolumn{1}{c}{Test case 3 } \\ 
\midrule
\multicolumn{5}{l}{van Genuchten-Mualem}\\
&\theta_R     & 0.026 &0.026& 0.131 \\

&\theta_S     & 0.42 &0.42& 0.396  \\
&K_S
              & 0.12 &0.12& 4.96\cdot 10^{-2}   \\
&\alpha &0.551&0.95& 0.423\\
&n &2.9&2.9&2.06\\
\midrule
\multicolumn{5}{l}{L-scheme}\\
&L_1 & 0.1 &0.15& 3.501\cdot 10^{-3}\\
&L_2 = L_\theta & 0.136 &0.2341& 4.501\cdot 10^{-3}\\
\bottomrule
\end{tabular*}
\caption{Parameter values for all test cases. The parameters are presented in column format, where each column corresponds to the parameters for the specified test case.}
\label{table: example parameters}
\end{table}

\subsection{Test case 1: Strictly unsaturated medium}

In this test case, we consider a strictly unsaturated porous medium, and use the van Genuchten-Mualem parametrization that is described by parameters from \Cref{table: example parameters}. The test case is heavily inspired by \cite{IllianoDavide2021Isfs}, and the domain is given by $\Omega = \Omega_{1}\cup\Omega_{2}$, where $\Omega_{1}=[0,1]\times[0,1/4]$ and $\Omega_{2}=[0,1]\times(1/4,1]$. We consider the time interval $[0,T]$, where $T = \tau$ varies with choice of time step size $\tau$, as we only take one time step. As initial condition, we choose the pressure head
$$\boldsymbol{\psi}^0(x,z)=\begin{cases}-z-1/4 &(x,z)\in  \Om_1\\ -4 &(x,z)\in  \Om_2, \end{cases}$$ where $x$ represents the positional variable in the horizontal direction and $z$ in the vertical direction. 
A Dirichlet boundary condition is imposed at the top boundary that complies with the initial condition. For the rest of the boundary no-flow boundary conditions are used, and the following source term is applied
$$
f(x,z)=\begin{cases} 0 &(x,z)\in \Om_1\\ 0.06\cos\left(\frac{4}{3}\pi(z)\right)\sin\left(x\right)
&(x,z)\in \Om_2.\end{cases}$$
The solution after one time step with time step size $\tau = 1,$ is given in Figure~\ref{fig:unsaturatedpressure}.

% \begin{remark}
% Although the van Genuchten-Mualem parametrization \eqref{eq: Van Genutchen} generally does not satisfy the Lipschitz continuity assumption for the permeability $K$, it will for this test case be satisfied as $\psi$
% \end{remark}

\begin{figure}[H]
    \centering
    \includegraphics[width=8cm]{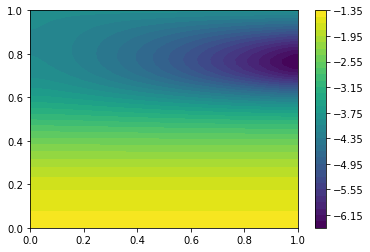}
    \caption{Test case 1: Strictly unsaturated medium. Pressure head $\psi$ at final time $T=1$.}
    \label{fig:unsaturatedpressure}
\end{figure}

\subsubsection{Comparison of convergence properties.}

%{The results for Example 1 are displayed in~\Cref{fig: convergence ex1}--}

%{We see in \Cref{fig: convergence ex1} the number of iterations needed for convergence with respect to the mesh size.  We see that Newton’s method only converges for coarse meshes. The required number of iterations Newton’s method are less than both parameters L. We see the indicator for when to switch from the L-scheme to Newton’s method is correct when Newton converges, resulting in the same total number of iterations for the L/N-scheme. For finer meshes we see that the required number of iterations for the L/N-scheme is less than for both L parameters.--}

%\todo[inline]{Alternative to the above paragraph. Possibly parts should be moved/repeated in a general conclusion.}
Here, we discuss the performance and convergence properties of the newly proposed L/N-scheme and compare it to the Newton method and the L-scheme.
In~\Cref{fig: convergence ex1}, the number of iterations for different choices of the mesh size parameters, with time step size $\tau = 0.01$ are presented. As expected the L-scheme is robust and converges in each scenario, for both $L_1$ and $L_2$. Newton’s method, however, only converges for sufficiently coarse meshes. Yet, when converging, it converges in fewer iterations than the L-scheme. Finally, the hybrid L/N method converges in as few if not fewer iterations as the Newton method (when it converges) and converges robustly, and in far fewer iterations than the L-scheme for the other mesh sizes.

Furthermore, a similar experiment is performed for a fixed mesh size $h = \sqrt{2}/40$, and varying time step sizes, see \Cref{fig: number of itr ex1}.
For larger time step sizes the Newton method diverges, while the other methods converge robustly. Again the L/N-scheme converges with the performance expected of Newton's method, in addition to being as robust as the L-scheme. We highlight the enormous difference in the number of iterations for the largest time step size $\tau= 1$ in \Cref{fig: number of itr ex1}. 

%Last to note is that in all examples $\eta_{L\to L}$ was only observed to become larger than one for non-convergent parameters $L$ and remained just below 1 when converging.  
% In all examples $\eta_{L\to L}$ was only observed to become larger than one for non-convergent parameters $L$ and remained just below 1 when converging.  

% However, in \Cref{fig:switching indictor ex1} we see a switching prediction too early, but due to the adaptivity it returns to the L-scheme and is in fact able to switch to Newton's method with precisely as few L-scheme iterations as possible.

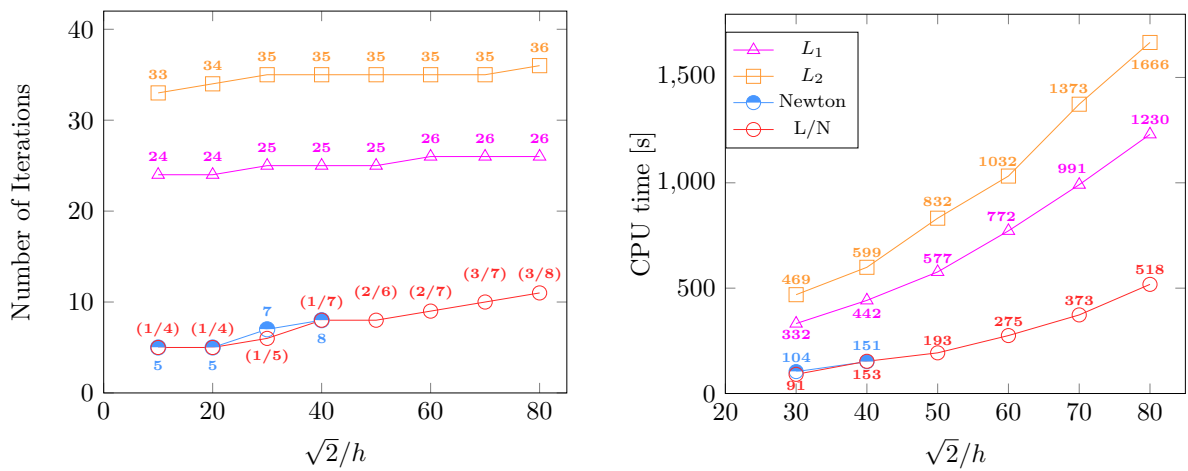
\begin{figure}[H]
\centering
    \subfloat[][Total number of iterations. The numbers in the red parentheses correspond to (number of L-scheme iterations/number of Newton iterations).\label{fig: convergence ex1}]{\resizebox{0.48\textwidth}{!}{
\begin{tikzpicture}[thick, scale=1]
\tikzstyle{every node}=[font=\small]
\begin{axis}
[
    xlabel={\scriptsize Time},
    xmin = 0,
    xmax = 85,
    ymin = 0,
    ymax = 42,
xlabel={$\sqrt{2}/h$}, ylabel=Number of Iterations] 
\addplot[color=colorL1,mark=triangle,mark options={scale=1.5}] 
coordinates { (10,24) (20,24) (30,25) (40,25) (50,25) (60,26) (70,26) (80,26) }; 
\addplot[color=colorL2,mark=square,mark options={scale=1.5}] 
coordinates { (10,33) (20,34) (30,35) (40,35) (50,35) (60,35) (70,35) (80,36) }; 
\addplot[color=colorNewton,mark=halfcircle*,mark options={scale=1.6}] 
coordinates { (10,5) (20,5) (30,7) (40,8) }; 
\addplot[color=colorLN,mark=o,mark options={scale=1.5}] 
coordinates { (10,5) (20,5) (30,6) (40,8) (50,8) (60,9) (70,10) (80,11) };

\node[font=\bf,color=colorL1] at (100,260){\tiny 24};
\node[font=\bf,color=colorL1] at (200,260){\tiny 24};  
\node[font=\bf,color=colorL1] at (300,270){\tiny 25};
\node[font=\bf,color=colorL1] at (400,270){\tiny 25};
\node[font=\bf,color=colorL1] at (500,270){\tiny 25};
\node[font=\bf,color=colorL1] at (600,280){\tiny 26};
\node[font=\bf,color=colorL1] at (700,280){\tiny 26};
\node[font=\bf,color=colorL1] at (800,280){\tiny 26};

\node[font=\bf,color=colorL2] at (100,350){\tiny 33};
\node[font=\bf,color=colorL2] at (200,360){\tiny 34};
\node[font=\bf,color=colorL2] at (300,370){\tiny 35};
\node[font=\bf,color=colorL2] at (400,370){\tiny 35};
\node[font=\bf,color=colorL2] at (500,370){\tiny 35};
\node[font=\bf,color=colorL2] at (600,370){\tiny 35};
\node[font=\bf,color=colorL2] at (700,370){\tiny 35};
\node[font=\bf,color=colorL2] at (800,380){\tiny 36};

\node[font=\bf,color=colorLN] at (800,130){\tiny(3/8)};
\node[font=\bf,color=colorLN] at (700,130){\tiny(3/7)};
\node[font=\bf,color=colorLN] at (600,110){\tiny(2/7)};
\node[font=\bf,color=colorLN] at (500,110){\tiny(2/6)};
\node[font=\bf,color=colorLN] at (400,100){\tiny(1/7)};
\node[font=\bf,color=colorLN] at (300,40){\tiny(1/5)};
\node[font=\bf,color=colorLN] at (200,70){\tiny(1/4)};
\node[font=\bf,color=colorLN] at (100,70){\tiny(1/4)};

\node[font=\bf,color=colorNewton] at (400,60){\tiny 8};
\node[font=\bf,color=colorNewton] at (300,90){\tiny 7};
\node[font=\bf,color=colorNewton] at (200,30){\tiny 5};
\node[font=\bf,color=colorNewton] at (100,30){\tiny 5};
\end{axis} 

%\node[color=colorLN,anchor=(10,5)]{5};
\end{tikzpicture}
}}
\hspace{0.02\textwidth}
   \subfloat[][Computational time in seconds.\label{fig: CPU time ex1}]{\resizebox{0.48\textwidth}{!}{
\begin{tikzpicture}[thick, scale=1]
\tikzstyle{every node}=[font=\small]
\begin{axis}
[ legend entries={{\scriptsize $L_1$,\scriptsize $L_2$,\scriptsize Newton,\scriptsize L/N}},
    legend style={at={(0.0,0.8)},anchor=west},
    xlabel={\scriptsize Time},
    xmin = 20,
    xmax = 85,
    ymin = 0,
    ymax = 1800,
xlabel={$\sqrt{2}/h$}, ylabel={CPU time [s]} ]
\addplot[color=colorL1,mark=triangle,mark options={scale=1.5}] 
coordinates { (30,332) (40,442) (50,577) (60,772) (70,991) (80,1230) }; 
\addplot[color=colorL2,mark=square,mark options={scale=1.5}] 
coordinates {  (30,469) (40,599) (50,832) (60,1032) (70,1373) (80,1666) }; 
\addplot[color=colorNewton,mark=halfcircle*,mark options={scale=1.5}] 
coordinates { (30,104) (40,151) }; 
\addplot[color=colorLN,mark=o,mark options={scale=1.5}] 
coordinates { (30,91) (40,153) (50,193) (60,275) (70,373) (80,518) };

\node[font=\bf,color=colorL1] at (100,28){\tiny 332};
\node[font=\bf,color=colorL1] at (200,38){\tiny 442};
\node[font=\bf,color=colorL1] at (300,64){\tiny 577};
\node[font=\bf,color=colorL1] at (390,84){\tiny 772};
\node[font=\bf,color=colorL1] at (490,107){\tiny 991};
\node[font=\bf,color=colorL1] at (600,130){\tiny 1230};

\node[font=\bf,color=colorL2] at (100,54){\tiny 469};
\node[font=\bf,color=colorL2] at (200,67){\tiny 599};
\node[font=\bf,color=colorL2] at (300,91){\tiny 832};
\node[font=\bf,color=colorL2] at (385,110){\tiny 1032};
\node[font=\bf,color=colorL2] at (485,145){\tiny 1373};
\node[font=\bf,color=colorL2] at (600,156){\tiny 1666};

\node[font=\bf,color=colorLN] at (600,59){\tiny 518};
\node[font=\bf,color=colorLN] at (500,44){\tiny 373};
\node[font=\bf,color=colorLN] at (400,34){\tiny 275};
\node[font=\bf,color=colorLN] at (300,25){\tiny 193};
\node[font=\bf,color=colorLN] at (200,9){\tiny 153};
\node[font=\bf,color=colorLN] at (100,4){\tiny 91};

\node[font=\bf,color=colorNewton] at (100,17){\tiny 104};
\node[font=\bf,color=colorNewton] at (200,23){\tiny 151};

\end{axis} 
\end{tikzpicture}
    }}
\caption{Test case 1: Strictly unsaturated medium. Performance metrics  for all linearization schemes for fixed $\tau=0.01$ and varying mesh size.}
\end{figure}

\usetikzlibrary{patterns}
\makeatletter
\tikzset{nomorepostaction/.code=\let\tikz@postactions\pgfutil@empty}
\makeatother
%Convergence was observed for all methods when $h\leq\frac{\sqrt{2}}{40}$ and $\tau\leq 0.01$
\pgfplotsset{yticklabel style={text width=3em,align=right}}
\begin{figure}[ht!]
    \centering
    \subfloat[][Number of iterations for different time step sizes.\label{fig: number of itr ex1}]{\resizebox{0.5\textwidth}{!}{
        \begin{tikzpicture}
    \begin{axis}[scale only axis,
          grid=major,
          height=7cm,
          width=8cm, x tick label style={ /pgf/number format/1000 sep=}, ylabel=Number of iterations, enlargelimits=0.15, legend style={at={(0.5,-0.2)}, anchor=north,legend columns=-1}, ybar, bar width=10pt,xtick={1930,1940,1950,1960},
    xticklabels={$0.001$,$0.01$,$0.1$,$1$},xlabel=$\tau$,ytick={0,10,20,30,40,55,84,114} ] 
\addplot[style= {fill=colorLN, mark=none, postaction={pattern=vertical lines}}] coordinates {(1930,5) (1940,8) (1950,11) (1960,8) }; 
\addplot[style= {fill=colorNewton, mark=none, postaction={pattern=horizontal lines}}] coordinates {(1930,5) (1940,8)  }; 
\addplot[style= {fill=colorL1, mark=none, postaction={pattern=north east lines}}] coordinates {(1930,40) (1940,25) (1950,29) (1960,84) }; 
\addplot[style= {fill=colorL2, mark=none, postaction={pattern=dots}}] coordinates {(1930,55) (1940,34) (1950,40) (1960,114)};
\legend{L/N,Newton,$L_1$,$L_2$} 

\node[color=colorLN] at (26.75,8) {\tiny(1/7)};
\node[color=colorLN] at (26.75,8) {\tiny(1/7)};
\node[color=colorLN] at (26.75,8) {\tiny(1/7)};
%\node[color=colorLN] at (27.6,80) {\tiny(1/7)};
%\node[color=colorLN] at (27.6,80) {\tiny(1/7)};
\node[color=colorLN] at (16.75,11) {\tiny(3/8)};
\node[color=colorLN] at (16.75,11) {\tiny(3/8)};
\node[color=colorLN] at (16.75,11) {\tiny(3/8)};
\node[color=colorLN] at (6.75,8) {\tiny(1/7)};
\node[color=colorLN] at (6.75,8) {\tiny(1/7)};
\node[color=colorLN] at (6.75,8) {\tiny(1/7)};
\node[color=colorLN] at (-3,7) {\tiny(1/4)};
\node[color=colorLN] at (-3,7) {\tiny(1/4)};
\node[color=colorLN] at (-3,7) {\tiny(1/4)};
\end{axis} 
\end{tikzpicture}
    }}
    % A blank line here like a new paragraph so next picture is placed below
    \subfloat[][Total computational time in seconds for different time step sizes.\label{fig: computation time ex1}]{\resizebox{0.5\textwidth}{!}{
         \begin{tikzpicture}
    \begin{axis}[scale only axis,
          grid=major,
          height=7cm,
          width=8cm, x tick label style={ /pgf/number format/1000 sep=}, ylabel={CPU time [s]}, enlargelimits=0.15, legend style={at={(0.5,-0.2)}, anchor=north,legend columns=-1}, ybar, bar width=10pt,xtick={1930,1940,1950,1960},
    xticklabels={$0.001$,$0.01$,$0.1$,$1$},xlabel=$\tau$ ] 
\addplot[style = {fill=colorLN, mark=none, postaction={pattern=horizontal lines}}] coordinates {(1930,101) (1940,153) (1950,189) (1960,154) }; 
\addplot[style = {fill=colorNewton, mark=none, postaction={pattern=vertical lines}}] coordinates {(1930,101) (1940,151)  }; 
\addplot[style= {fill=colorL1, mark=none, postaction={pattern=north east lines}}]  coordinates {(1930,698) (1940,442) (1950,511) (1960,1451) }; 
%color=colorL1!20!black!,fill=colorL1!80!white
%\addplot[color=colorL2!20!black!,fill=colorL2!80!white]
\addplot[style= {fill=colorL2, mark=none, postaction={pattern=dots}}] coordinates {(1930,958) (1940,599) (1950,699) (1960,1962)};
\legend{L/N,Newton,$L_1$,$L_2$} 
\end{axis} 
\end{tikzpicture}
    }}
    \caption{Test case 1: Strictly unsaturated medium: Performance comparison for all of the linearization schemes for different time step sizes and fixed mesh size $h=\sqrt{2}/40$.}
    \label{fig: convergence time step ex 1}
    \end{figure}
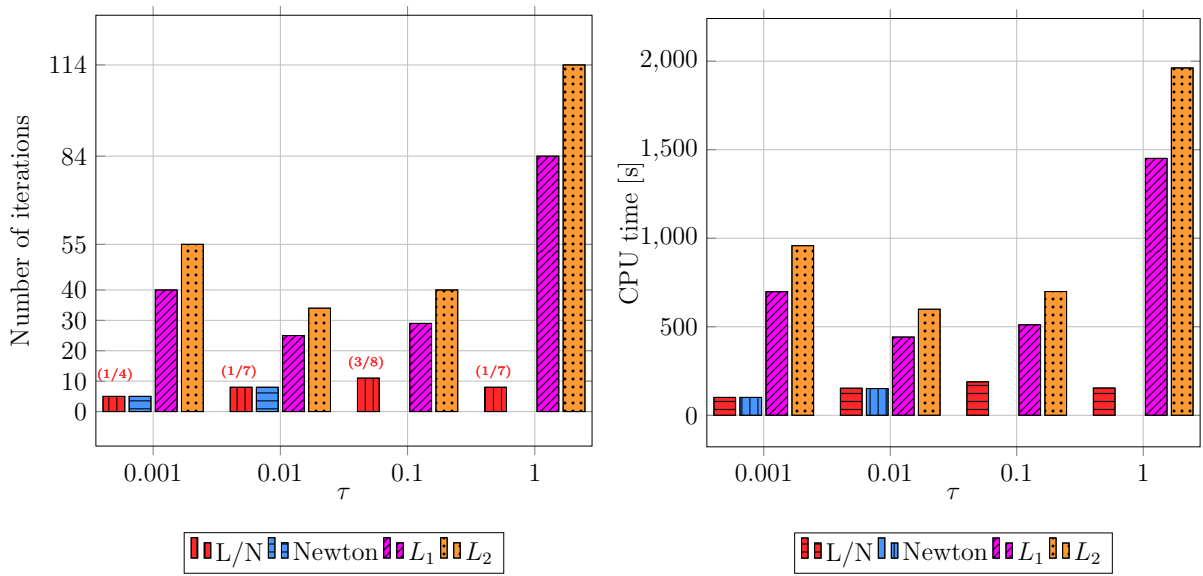

Then, the performance of the linearization schemes is compared in terms of computational time, cf.\ \Cref{fig: CPU time ex1} and \Cref{fig: computation time ex1}. 
% \js{By computational time we mean the time for assembly of linear systems, linear solvers, computation of switching indicators and the iteration-dependent energy norm used as a stopping criterion. Although the latter two are negligible compared to the assembly.}
% In \Cref{fig: CPU time ex1} we see the computational times,
One can observe virtually the same performance for the hybrid method as for Newton's method when the latter converges. The former in fact is sometimes slightly faster, due to each L-scheme iteration being slightly less expensive than a Newton iteration, see Remark~\ref{rem:newton_slow}. In addition, the hybrid method continues to show the same performance for the cases in which Newton's method does not converge. Finally, \Cref{fig: CPU time ex1} shows that, for all meshes, the computational time of the L-schemes is consistent with the reported numbers of iterations in \Cref{fig: convergence ex1} with $L_1$ being the fastest. Although it uses more than double the computational time of the L/N-scheme.
%For finer meshes, however, the L/N-scheme is more computationally expensive than the fastest L-scheme (using $L_1$). We note that this is \textbf{not} due to the switch criterion or anything that is related to the switching scheme, but merely the implementation of the Newton method, see Remark~\ref{rem:newton_slow}. The computations of the switch indicators remain negligible compared to the remaining computations. 

Overall, the newly proposed L/N-scheme shows the best performance. It is as fast as Newton's method when it converges, and is significantly more robust. 

%It is only contested by an optimized L-scheme in terms of computational time here, but would for optimized implementations be superior. Moreover, it is not reasonable to assume that the optimal L-scheme parameter is used in actual simulations.

\begin{remark}[Computational time per iteration]\label{rem:newton_slow}
 It is known that condition numbers for matrices coming from systems linearized by Newton's method are higher than for those linearized by the L-scheme \cite{ListFlorian2016Asoi}. Therefore, each iteration of Newton's method, when implemented without preconditioning, takes more time than each L-scheme iteration. 
 \end{remark}

 \begin{remark}[Computational time for the coarsest mesh]
     The computational times of the coarsest meshes are omitted due to the use of multiprocessing in the implementations. This causes the most time consuming part to be the spawn process of the local assembly on each element. As a result,  the computational times for the coarsest meshes are very similar for all the linearization methods.
 \end{remark}

% We see that in the case of Newton’s method converging it is slightly slower than the L/N- scheme, while both parameters L has a significantly higher computational time. For $h\geq \sqrt{2}/70$, we see that $L_1$ is barely faster due to the additional Newton iterations in the L/N-scheme. For $L_2$ however, it is slower than both converging schemes, using 132$\%$ of the computational time compared to the L/N-scheme.

% For fixed mesh size $h=\sqrt{2}/40$, we see the computational time for different time step lengths in \Cref{fig: computation time ex1}. For the largest time step $\tau=1$ the L/N-scheme is the fastest. For the smallest time step $\tau=0.001$ Newton's method and the L/N-scheme is almost as fast as each other, and significantly faster than the different L parameters.

\subsubsection{Switching characteristics}

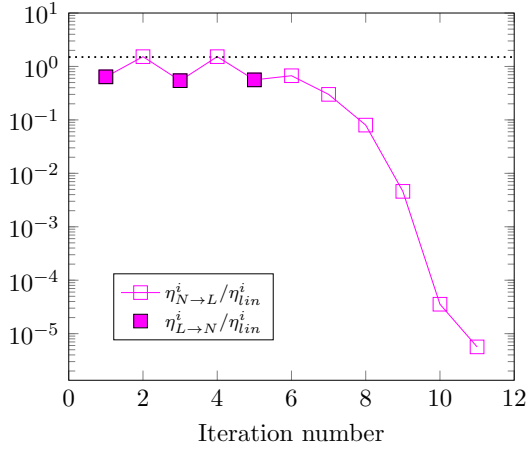
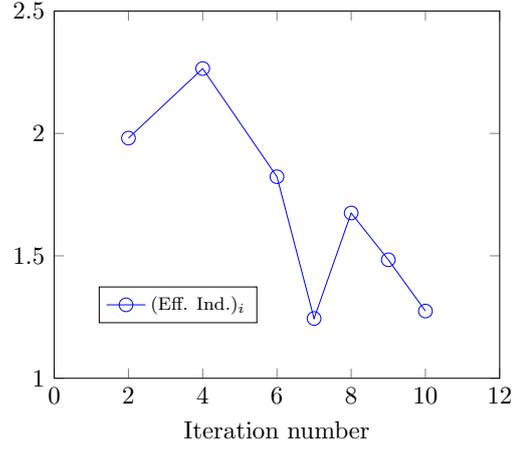
\begin{figure}
\centering

\tikzstyle{every node}=[font=\small]
    \subfloat[][Evolution of switching indicators for L/N-scheme where the dashed line is $C_{\rm tol}=1.5$. The L/N-scheme oscillates between the linearization strategies, but eventually recovers. ]{\resizebox{0.48\textwidth}{!}{
    \begin{tikzpicture} 
\begin{semilogyaxis}
[ legend entries={{\scriptsize $\eta_{N\to L}^i/\eta_{lin}^i$,\scriptsize $\eta_{L\to N}^i/\eta_{lin}^i$}},
    ,
    xlabel={\scriptsize Time},
    xmin = 0,
    xmax = 12,
    ymax = 10,
xlabel=Iteration number,legend style={at={(0.1,0.2)},anchor=west}] 
%\addplot[color=colorL1,mark=square,mark options={scale=1.5}] 
%coordinates { (1,0.6392) (2,2.80092721) (3,0.54187) (4,1.34453961) (5,0.561055)   (6,0.2794973) (7,0.10608104) (8,0.024128) (9,0.00151754) (10,0.0000124) (11,0.0000056914)  }; 
%\addplot[mark=square,only marks,fill=colorL1,mark options={scale=1.5}] coordinates {(1,0.6392)  (3,0.54187)  (5,0.561055) };

\addplot[color=colorL1,mark=square,mark options={scale=1.5}] 
coordinates { (1,0.6392) (2,1.52558603) (3,0.54187) (4,1.52558603) (5,0.561055)   (6,0.67068999) (7,0.30026283) (8,0.07990169) (9,0.00459914) (10,3.5533441e-05) (11,5.65431635e-06)  }; 
\addplot[mark=square*,only marks,fill=colorL1,mark options={scale=1.5}] coordinates {(1,0.6392)  (3,0.54187)  (5,0.561055) };

%
%\addplot[color=colorL2,mark=square] 
%coordinates { (2,2.80092721) (4,1.34453961) (6,0.2794973) (7,0.10608104) (8,0.024128) (9,0.00151754) (10,0.0000124) (11,0.0000056914) }; 
%\addplot[mark=pentagon*,red] coordinates { (3,0.52584307)};
\addplot[thick,dotted] coordinates {(0,1.5) (12,1.5)};

\end{semilogyaxis} 
\end{tikzpicture}
}}\hspace{0.02\textwidth}
    \subfloat[][Efficency index. Notice that the iterations correspond to the ones in Figure~\ref{fig:switching indictor ex1}(a), and that only the ones where the Newton method is performed are counted, i.e., iteration 1,3 and 5 are removed.]{\resizebox{0.48\textwidth}{!}{
    \begin{tikzpicture} 
\begin{axis}
[ legend entries={{\scriptsize (Eff. Ind.)$_i$}},
    ,
    xlabel={\scriptsize Time},
    xmin = 0,
    xmax = 12,
    ymin = 1,
    ymax = 2.5,
xlabel=Iteration number,legend style={at={(0.1,0.2)},anchor=west}] 
%\addplot[color=colorL1,mark=square,mark options={scale=1.5}] 
%coordinates { (1,0.6392) (2,2.80092721) (3,0.54187) (4,1.34453961) (5,0.561055)   (6,0.2794973) (7,0.10608104) (8,0.024128) (9,0.00151754) (10,0.0000124) (11,0.0000056914)  }; 
%\addplot[mark=square,only marks,fill=colorL1,mark options={scale=1.5}] coordinates {(1,0.6392)  (3,0.54187)  (5,0.561055) };

\addplot[mark=o,color=blue,mark options={scale=1.5}] coordinates {(2,1/0.5048229088168802)  (4,1/0.4414872798434442) (6,1/0.5484988122403807) (7,1/0.803815841865507) (8,1/0.5970503891847604) (9,1/0.6739188704302732) (10,1/ 0.7845605923627317)   };%
%\addplot[color=colorL2,mark=square] 
%coordinates { (2,2.80092721) (4,1.34453961) (6,0.2794973) (7,0.10608104) (8,0.024128) (9,0.00151754) (10,0.0000124) (11,0.0000056914) }; 
%\addplot[mark=pentagon*,red] coordinates { (3,0.52584307)};

\end{axis}
\end{tikzpicture}    }}
%\node[color=colorLN,anchor=(10,5)]{5};

\caption{Test case 1: Strictly unsaturated medium. Evolution of switching indicators for the L/N-scheme and efficiency indices \eqref{eq: eff ind} for the Newton iterations (see \Cref{rem: effectivity}). Here, the mesh size is $h=\sqrt{2}/80$ and time step size $\tau=0.01$.}
\label{fig:switching indictor ex1}
\end{figure}

Finally, the dynamic switch between the L-scheme and Newton's method is inspected in further detail. In~\Cref{fig:switching indictor ex1}, the evolution of the indicators for the switch is displayed for a fixed mesh and time step size. The example particularly demonstrates the ability of the hybrid method to switch back and forth between both linearizations before  switching fully to Newton. In addition, the final number of L-scheme iterations is kept at its minimum. The plot also shows the effectivity indices introduced in \eqref{eq: eff ind} and discussed in \Cref{rem: effectivity}. The effectivity index is greater than 1 in all cases, which validates \Cref{prop:LtoN,prop:NtoN} and it stays between 1.27 to 2.3, implying that the estimators $\eta_{\LN}^i$ and $\eta^i_{\NL}$ are sharp.

%\begin{figure}[htp]
%     \centering
%     \begin{subfigure}[b]{0.45\textwidth}
%         \centering
 %        \includegraphics[width=\textwidth]{Pictures/itr_ex1.eps}
 %        \caption{No. Iterations}
%         \label{fig:y equals x}
%     \end{subfigure}
%     \begin{subfigure}[b]{0.45\textwidth}
 %        \centering
 %        \includegraphics[width=\textwidth]{Pictures/cpu_ex1.eps}
%         \caption{CPU time}
%         \label{fig:three sin x}
%     \end{subfigure}
%        \label{fig:ex1}
%\end{figure}
%\subsection{Tidal/Irrigation problem}

%In this example we consider a time-dependent Dirichlet boundary condition which could represent a tidal or irrigation pattern, also considered in \cite{KavetskiDmitri2002Ntss}. The initial pressure head profile is
%\begin{align*}
%    \psi(0,x,z)=\begin{cases}
%    -5+\frac{5.05-100}{0.05}(1-z),\quad &0.95\leq z\leq 1,\\
%    100(1-z)-100,\quad &0\leq z<0.95.
%    \end{cases}
%\end{align*}
%At the bottom of the soil column a constant Dirichlet condition is imposed $\psi(t,x,0)=0$, whereas a time-dependent Dirichlet condition is enforced at the top boundary,
%\begin{align*}
%    \psi(t,x,1)=\begin{cases}
%    -5+3\sin(2\pi t/0.1), \quad &0<t\leq 0.1,\\
%    10,\quad &0.1<t\leq0.2,\\
%    -5+1.9e^{-\frac{t}{0.2}},\quad &0.2<t\leq0.3.
%    \end{cases}
%\end{align*}

\subsection{Test case 2: Variably saturated medium}

The example parameters are as in \Cref{table: example parameters}, Test case 3. We consider a variably saturated medium, 
 $\Omega = \Omega_{gw}\cup\Omega_{vad}$, where the groundwater zone is $\Omega_{gw}=[0,1]\times[0,1/4)$ and a vadoze zone is $\Omega_{vad}=[0,1]\times[1/4,1]$.
Here, we consider the time interval $[0,T]$, where $T=0.01$ and we only take one time step with $\tau=0.01$.  As initial condition, we choose the pressure head
$$\boldsymbol{\psi}^0(x,z)=\begin{cases}-z+1/4 &(x,z)\in  \Om_{gw}\\ -3 &(x,z)\in  \Om_{vad}, \end{cases}$$ where $x$ represents the positional variable in the horizontal direction and $z$ in the vertical direction. 
 On the surface a constant  Dirichlet boundary condition is imposed, being equal to the initial condition at all times.  For the rest of the boundary no-flow boundary conditions are used. We apply the following source term 
 $$
f(x,z)=\begin{cases} 0 &(x,z)\in \Om_{gw}\\ 0.006\cos\left(\frac{4}{3}\pi(z-1)\right)\sin\left(2\pi x\right)
&(x,z)\in \Om_{vad}.\end{cases}$$
After one time step the pressure head profile is given in \Cref{fig:saturatedpressure}.
\begin{figure}[H]
    \centering
    \includegraphics[width=8cm]{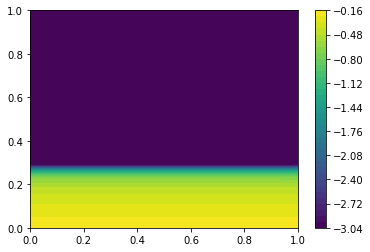}
    \caption{Test case 2: Variably saturated medium: Pressure head profile at $T=0.01$.}
    \label{fig:saturatedpressure}
\end{figure}

\begin{figure}[H]

\centering
    \subfloat[][Total number of iterations. The numbers in the red parentheses correspond to (number of L-scheme iterations/number of Newton iterations).\label{fig: convergence ex2}]{\resizebox{0.5\textwidth}{!}{
\begin{tikzpicture}[thick, scale=1]
\tikzstyle{every node}=[font=\small]
\begin{axis}
[
    xmin = 0,
    xmax = 85,
    ymin = 0,
    ymax = 70,
xlabel={$\sqrt{2}/h$}, ylabel= Number of iterations,ylabel style={at={(-0.1,0.5)}}] 
\addplot[color=colorL1,mark=triangle,mark options={scale=1.5}] 
coordinates { (10,43) (20,43) (30,35) (40,35) (50,36) (60,36) (70,38) (80,39) }; 
\addplot[color=colorL2,mark=square,mark options={scale=1.5}] 
coordinates { (10,67) (20,66) (30,51) (40,54) (50,56) (60,55) (70,57) (80,58) }; 
\addplot[color=colorLN,mark=o,mark options={scale=1.5}] 
coordinates { (10,43) (20,9) (30,12) (40,11) (50,11) (60,10) (70,10) (80,10) };

\node[font=\bf,color=colorLN] at (800,130){\tiny(4/6)};
\node[font=\bf,color=colorLN] at (700,130){\tiny(5/5)};
\node[font=\bf,color=colorLN] at (600,130){\tiny(5/5)};
\node[font=\bf,color=colorLN] at (500,140){\tiny(8/3)};
\node[font=\bf,color=colorLN] at (400,130){\tiny(6/3)};
\node[font=\bf,color=colorLN] at (300,150){\tiny(8/4)};
\node[font=\bf,color=colorLN] at (200,60){\tiny(5/4)};
\node[font=\bf,color=colorLN] at (100,460){\tiny(43)};

\node[font=\bf,color=colorL1] at (800,420){\tiny(39)};
\node[font=\bf,color=colorL1] at (700,410){\tiny(38)};
\node[font=\bf,color=colorL1] at (600,390){\tiny(36)};
\node[font=\bf,color=colorL1] at (500,390){\tiny(36)};
\node[font=\bf,color=colorL1] at (400,380){\tiny(35)};
\node[font=\bf,color=colorL1] at (300,390){\tiny(35)};
\node[font=\bf,color=colorL1] at (200,460){\tiny(43)};
\node[font=\bf,color=colorL1] at (100,490){\tiny(43)};

\node[font=\bf,color=colorL2] at (800,610){\tiny(58)};
\node[font=\bf,color=colorL2] at (700,600){\tiny(57)};
\node[font=\bf,color=colorL2] at (600,580){\tiny(55)};
\node[font=\bf,color=colorL2] at (500,590){\tiny(56)};
\node[font=\bf,color=colorL2] at (400,570){\tiny(54)};
\node[font=\bf,color=colorL2] at (300,480){\tiny(51)};
\node[font=\bf,color=colorL2] at (200,620){\tiny(66)};
\node[font=\bf,color=colorL2] at (100,630){\tiny(67)};

\end{axis} 

%\node[color=colorLN,anchor=(10,5)]{5};
\end{tikzpicture}
}}
   \subfloat[][Computational time in seconds.\label{fig: cpu ex2}]{\resizebox{0.5\textwidth}{!}{
\begin{tikzpicture}[thick, scale=1]
\tikzstyle{every node}=[font=\small]
\begin{axis}
[legend entries={{\scriptsize $L_1$,\scriptsize $L_2$,\scriptsize L/N}},
    xlabel={\scriptsize Time},
    xmin = 20,
    xmax = 85,
    ymin = 0,
    ymax = 2800,
xlabel={$\sqrt{2}/h$}, ylabel= {CPU time [s]},ylabel style={at={(-0.15,0.5)}},legend style={at={(0.05,0.8)},anchor=west}] 
\addplot[color=colorL1,mark=triangle,mark options={scale=1.5}] 
coordinates {  (30,477) (40,616) (50,828) (60,1058) (70,1442) (80,1808) }; 
\addplot[color=colorL2,mark=square,mark options={scale=1.5}] 
coordinates { (30,700) (40,939) (50,1269) (60,1614) (70,2141) (80,2676) }; 
\addplot[color=colorLN,mark=o,mark options={scale=1.5}] 
coordinates {  (30,169) (40,173) (50,273) (60,304) (70,395) (80,484) };

\node[font=\bf,color=colorLN] at (600,60){\tiny(484)};
\node[font=\bf,color=colorLN] at (500,50){\tiny(395)};
\node[font=\bf,color=colorLN] at (400,42){\tiny(304)};
\node[font=\bf,color=colorLN] at (300,39){\tiny(273)};
\node[font=\bf,color=colorLN] at (200,28){\tiny(173)};
\node[font=\bf,color=colorLN] at (100,28){\tiny(169)};

\node[font=\bf,color=colorL1] at (600,192){\tiny(1808)};
\node[font=\bf,color=colorL1] at (485,156){\tiny(1442)};
\node[font=\bf,color=colorL1] at (400,92){\tiny(1058)};
\node[font=\bf,color=colorL1] at (300,70){\tiny(828)};
\node[font=\bf,color=colorL1] at (200,50){\tiny(616)};
\node[font=\bf,color=colorL1] at (100,39){\tiny(477)};

\node[font=\bf,color=colorL2] at (608,250){\tiny(2676)};
\node[font=\bf,color=colorL2] at (485,228){\tiny(2141)};
\node[font=\bf,color=colorL2] at 
(385,173){\tiny(1614)};
\node[font=\bf,color=colorL2] at 
(297,141){\tiny(1269)};
\node[font=\bf,color=colorL2] at 
(200,107){\tiny(939)};
\node[font=\bf,color=colorL2] at 
(100,83){\tiny(700)};
\end{axis} 

%\node[color=colorLN,anchor=(10,5)]{5};
\end{tikzpicture}
   }}
\caption{Test case 2: Variably saturated medium: Performance metrics for all linearization schemes for fixed $\tau=0.01$ and varying mesh size.}
\end{figure}
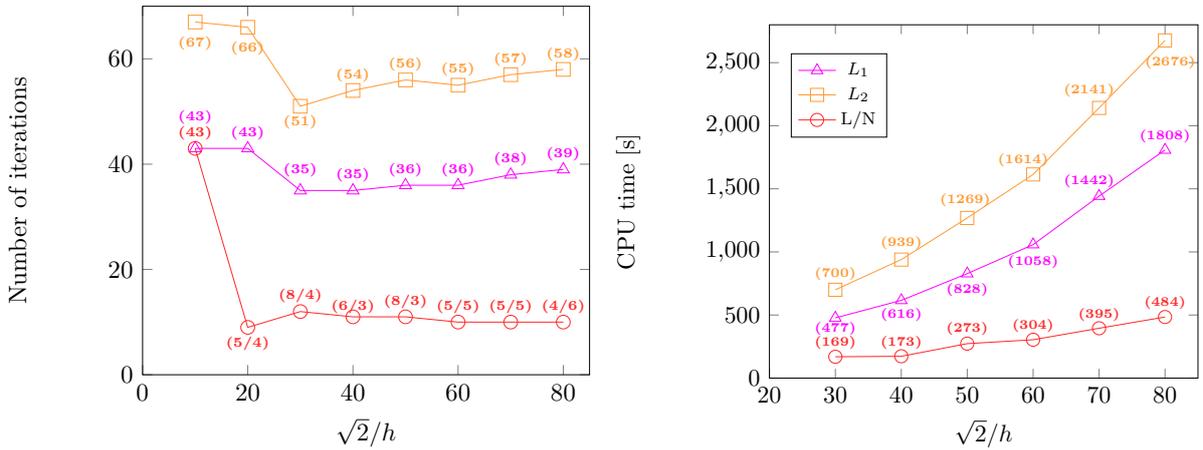

\subsubsection{Comparison of convergence properties.}
The iteration count for the second test case for different mesh sizes and fixed time step for all linearization schemes is illustrated in \Cref{fig: convergence ex2}.  Again the L-scheme converges in every case. However, Newton’s method does not converge for any mesh size. The hybrid method needs the fewest number of iterations, which shows that the dynamic switch is successful.

The CPU time performance of the linearization schemes is compared in \Cref{fig: cpu ex2}. Both versions of the L-scheme takes computational times consistent with the number of iterations, with the simulations with the parameter $L_1$ being less expensive. However, the L-scheme (using $L_1$) requires approximately 373\% of the computational time of the hybrid method including the computation of the switching indicators. In addition, the benefit of a few additional L-scheme iterations further decreases the computational time of the hybrid method.  

\subsubsection{Switching characteristics}
We also give a more in-depth look to the dynamic switch between the Newton's method and the L-scheme. In \Cref{fig:switching indictor ex2}, the evolution of the switching indicators is shown for a fixed  time step and a fixed mesh size. After 8 L-scheme iterations the switching indicator $\eta_{\LN}$ becomes lower than $C_{\rm tol}$ and then Newton's method converges. From \Cref{fig: convergence ex2} the number of L-scheme iterations required before the switching indicator becomes small enough to switch to Newton's method varies with the mesh size. 
Note that for the coarsest mesh no switch to Newton's method happens.

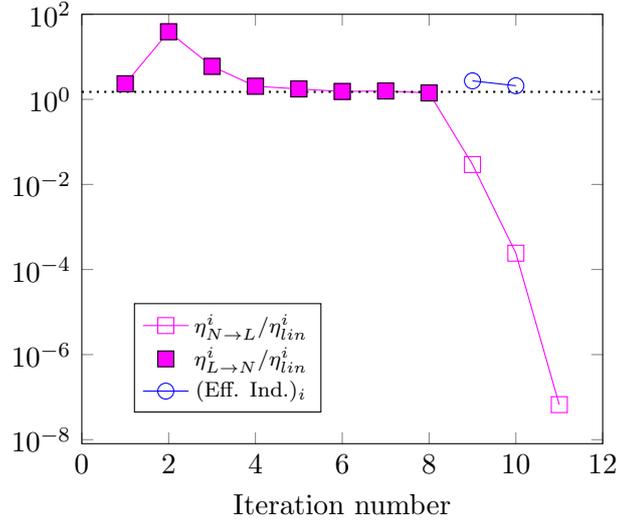
\begin{figure}[H]
\centering
\begin{tikzpicture} 
\tikzstyle{every node}=[font=\small]
\begin{semilogyaxis}
[ legend entries={{\scriptsize $\eta_{N\to L}^i/\eta_{lin}^i$,\scriptsize $\eta_{L\to N}^i/\eta_{lin}^i$,\scriptsize (Eff. Ind.)$_i$}},
    ,
    xlabel={\scriptsize Time},
    xmin = 0,
    xmax = 12,
    ymax = 100,
xlabel=Iteration number,legend style={at={(0.1,0.2)},anchor=west}] 
%\addplot[color=colorL1,mark=square,mark options={scale=1.5}] 
%coordinates { (1,0.6392) (2,2.80092721) (3,0.54187) (4,1.34453961) (5,0.561055)   (6,0.2794973) (7,0.10608104) (8,0.024128) (9,0.00151754) (10,0.0000124) (11,0.0000056914)  }; 
%\addplot[mark=square,only marks,fill=colorL1,mark options={scale=1.5}] coordinates {(1,0.6392)  (3,0.54187)  (5,0.561055) };

\addplot[color=colorL1,mark=square,mark options={scale=1.5}] 
coordinates { (1,2.32277169) (2,38.84575595) (3,5.96631624) (4,2.03543675) (5,1.75522805)   (6,1.53010559) (7,1.57266833) (8,1.4130238) (9,0.02936379) (10,0.00024202) (11,6.65890813e-08)  }; 
\addplot[mark=square*,only marks,fill=colorL1,mark options={scale=1.5}] coordinates {(1,2.32277169) (2,38.84575595) (3,5.96631624) (4,2.03543675) (5,1.75522805)   (6,1.53010559) (7,1.57266833) (8,1.4130238) };

\addplot[,color=blue,mark=o,mark options={scale=1.5}] coordinates {(9,1/0.36724817417338246) (10,1/0.4785528002562701)   };
%\addplot[color=colorL2,mark=square] 
%coordinates { (2,2.80092721) (4,1.34453961) (6,0.2794973) (7,0.10608104) (8,0.024128) (9,0.00151754) (10,0.0000124) (11,0.0000056914) }; 
%\addplot[mark=pentagon*,red] coordinates { (3,0.52584307)};
\addplot[thick,dotted] coordinates {(0,1.5) (12,1.5)};

\end{semilogyaxis} 

%\node[color=colorLN,anchor=(10,5)]{5};
\end{tikzpicture}
\caption{Test case 2: Variably saturated medium: Evolution of switching indicators for L/N-scheme for fixed $h=\sqrt{2}/50$ and $\tau=0.01$. The dashed line is $C_{\rm tol}=1.5$, the switching criterion from L-scheme to Newton's method. The effectivity indices \eqref{eq: eff ind} corresponding to the Newton iterations are also plotted and they remain below 2.8.}
\label{fig:switching indictor ex2}
\end{figure}

\subsection{Test case 3: Benchmark problem}

Here, we consider a known benchmark problem \cite{schneid2000hybrid}, also used e.g. in \cite{ListFlorian2016Asoi}, which models the recharge of a groundwater reservoir from a drainage trench in two spatial dimensions. The domain $\Om\subset\mathbb{R}^2$ represents a vertical segment of the subsurface. One portion of the right side of the domain is fixed by a constant Dirichlet boundary condition. A time-dependent Dirichlet boundary condition on parts of the upper boundary is used to mimic the drainage trench. No-flow conditions are utilized on the remaining parts of the boundary. The used parameters are given in \Cref{table: example parameters} Test case 3, corresponding to silt loam.
The geometry is given by
\begin{align*}
    \Om &= [0,2]\times[0,3],\\
    \Gamma_{D_1}&=[0,1]\times(3),\\
    \Gamma_{D_2}&=(2)\times[0,1],\\
    \Gamma_N&=\Omega\backslash\left\{\Gamma_{D_1}\cup\Gamma_{D_2}\right\},\\
\end{align*}
and the initial pressure head distribution and boundary conditions are
\begin{align*}
        &\psi(0,x,z)=1-z\\
    	&\psi(t,x,z) = \begin{cases}
			-2+35.2t, \quad &\mbox{if } t\leq \frac{1}{16},\quad \mbox{on } \Gamma_{D_1},\\
			0.2,  \quad &\mbox{if } t> \frac{1}{16},\quad \mbox{on } \Gamma_{D_1},\\
			1-z,\quad &\mbox{on } \Gamma_{D_2},\\
		\end{cases}\\
		&-K(\theta(\psi(t,x,z)))\nabla(\psi(t,x,z) +z)\cdot\boldsymbol{\nu}=0,\quad \mbox{on }\Gamma_N,\\
\end{align*}
where $\boldsymbol{\nu}$ is the outward normal vector.
 The solution is computed over 9 timesteps, where the time unit is in days, with time step size $\tau=1/48$ and with a regular mesh consisting of 2501 nodes. The pressure head profile at the final time for the L/N-scheme is shown in \Cref{fig: benchmark}.

\begin{figure}[H]
    \centering
    \includegraphics[width=6cm]{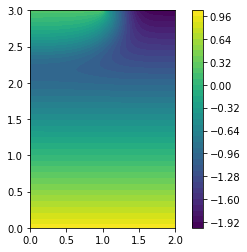}
    \caption{Test case 3: Benchmark problem: Pressure head profile at $4.5$ hours.}
    \label{fig: benchmark}
\end{figure}

\begin{center}
\begin{table}[H]
\setlength\tabcolsep{0pt} % let LaTeX figure out amount of inter-column whitespace
\centering
\begin{tabular*}{0.6\textwidth}{@{\extracolsep{\fill}} c *{2}{d{2}} }
\toprule
 
 & \multicolumn{1}{c}{No. Itr} & \multicolumn{1}{c}{CPU time [s]} \\ 
\midrule
$L_1$     & 274 & 6136 \\

$L_2$     & 330 & 7356  \\
Newton
              & 39 & 980   \\
L/N      & (10/30) & 1021\\
\bottomrule
\end{tabular*}
\caption{Test case 3: Benchmark problem: Performance metrics for 2501 nodes.}
\label{table: ex 3}
\end{table}
\end{center}

\subsubsection{Comparison of convergence properties.}

The performance of all schemes for test case 3 is displayed in \Cref{table: ex 3}. All schemes converge for this example. The Newton method requires the least amount of iterations. However, the hybrid method only needs one more iteration. Both uses  significantly less iterations than the L-schemes. For all time steps except one, only one L-scheme iteration is needed per time step, which indicates a successful dynamic switch for almost all time steps.

The computational time for the L-schemes is much higher than both Newton's method and the hybrid method, which is consistent with the expense per iteration discussed in \Cref{rem:newton_slow}. More significantly, the L/N-scheme performs almost the same as Newton's method.

\section{Conclusions}\label{sec: conclusions}

In this paper, we considered solving Richards' equation, which models the flow of water through saturated/\-unsaturated porous media (soil). After applying backward Euler time-discretization and continuous Galerkin finite element space-discretization to Richards' equation, to solve the resulting nonlinear finite-dimensional problem  we developed a hybrid iterative linearization strategy that combines the L-scheme with the Newton method. The idea behind this is to use the robust, but only first-order convergent L-scheme to stabilize the quadratically convergent Newton method. The switching between the two schemes is done in an adaptive manner using {\it a posteriori} indicators which predict the linearization error of the next iteration using a concept of iteration-dependent energy norms. After each iteration, it is checked whether the Newton method is predicted to decrease the linearization error of the next iteration. If so, then the Newton method is used, otherwise, the iteration is done using the L-scheme.
The hybrid scheme is now robust, but still quadratically convergent after switching to the Newton scheme. 

The performance of the hybrid scheme is tested on illustrative, realistic numerical examples which reveal that the scheme is as robust as the L-scheme and it converges in cases where Newton fails. Moreover, in cases when Newton converges, the hybrid scheme takes roughly the same amount of iterations and computational time and is considerably faster than even the optimized L-scheme. Lastly, we comment that the scheme is quite general as it can, in principle, be extended to other spatial discretization and linearization methods.

%\begin{table*}\centering \ra{1.4} \begin{tabular}{@{}rrrrcrrrr@{}}\toprule & \multicolumn{2}{c}{No. Elements 2501}  & \\ 
%\cmidrule{2-4} & Itr & CPU time \\ \midrule 
%$L=4.501\cdot 10^{-2}$      &330 & 4396 \\
%$L=3.501\cdot 10^{-2}$      &274 & 3761 \\ 

%Newton          &39 &1145 \\ 
%L/N           & 39=(9/30) & 1167& \\ \bottomrule \end{tabular} %\caption{Caption} \end{table*}

%AA(5)      &105 & 1486 \\ 

\begin{appendices}

\section{An adaptive L-scheme}\label{sec: LtoL}

As discussed in \Cref{sec: intro,sec: Lin Scheme}, the L-scheme converges unconditionally provided that $L\geq \frac{1}{2}\sup_{\xi\in \R}\theta'(\xi)$ and the time step size $\t$ is smaller than a constant independent of the mesh size. However, numerical results in \cite{ListFlorian2016Asoi} suggest that the optimal rate of convergence of the L-scheme is obtained for a considerably smaller $L$ although convergence cannot always be guaranteed for such values. Hence, to speed up the computations, it is possible to start the iterations with a smaller value of $L$ and then use the {\it a posteriori} estimates to decide if $L$ is to be increased or not. Analogous to \Cref{prop:LtoN,prop:NtoN} we state a result that allows us to do this rigorously.

\begin{proposition}[Error control of L-scheme]\label{prop:LtoL}
For a given $\psi^{n,0}_h,\, \psi^{n-1}_h\in V_h$, let $\{\psi^{n,j}_h\}^{i+1}_{j=1}\subset V_h$ solve \eqref{eq: L-scheme} for some $i\in \mathbb{N}$. Then under Assumption~\ref{as:auxiliary_functions},
$$
\norm{\psi^{n,i+1}_h-\psi^{n,i}_h}_{L,\psi^{n,i}_h}\leq \eta_{\LL}^i,
$$
where $$\eta_{\LL}^i :=\left([\eta_{\LL}^{i,{\rm poten}}]^2 + \t [\eta_{\LL}^{i,{\rm flux}}]^2\right)^{\frac{1}{2}}$$
with \begin{align*}
    &\eta_{\LL}^{i,{\rm poten}}:= \|L^{-\frac{1}{2}}(L(\psi^{n,i}_h-\psi^{n,i-1}_h)-(\theta(\psi^{n,i}_h)-\theta(\psi^{n,i-1}_h)))\|,\\
    &\eta_{\LL}^{i,{\rm flux}}:=  \left\|
         (K(\theta(\psi_h^{n,i}))-K(\theta(\psi_h^{n,i-1}))) K(\theta(\psi_h^{n,i}))^{-\frac{1}{2}}\del (\psi^{n,i}_h+z)\right\| .
\end{align*}
\end{proposition}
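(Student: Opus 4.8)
The plan is to mirror the proof of Proposition~\ref{prop:LtoN}, while exploiting the two features that distinguish the L-scheme from Newton: it carries no linearized convection term, and its energy norm \eqref{eq:energy norm L} is weighted by the strictly positive constant $L$ rather than by the possibly degenerate $\theta'$. Consequently both Assumption~\ref{as:convection} and the equilibrated flux will be unnecessary. First I would set $\d\psi^{i+1}_h := \psi^{n,i+1}_h-\psi^{n,i}_h\in V_h$ and rearrange the L-scheme \eqref{eq: L-scheme} at iteration $i+1$ into an equation for $\d\psi^{i+1}_h$. Testing it with $v_h=\d\psi^{i+1}_h$, the left-hand side becomes exactly $\norm{\d\psi^{i+1}_h}^2_{L,\psi^{n,i}_h}$ by \eqref{eq:energy norm L}, leaving
\begin{equation*}
\norm{\d\psi^{i+1}_h}^2_{L,\psi^{n,i}_h}=\t(f^n,\d\psi^{i+1}_h)-(\theta(\psi^{n,i}_h)-\theta(\psi^{n-1}_h),\d\psi^{i+1}_h)-\t(K(\theta(\psi^{n,i}_h))\del(\psi^{n,i}_h+z),\del\d\psi^{i+1}_h),
\end{equation*}
which is precisely the quantity $T_2$ of the previous proof, now with the convection contribution $T_1$ absent.

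Next I would simplify this residual using the L-scheme \eqref{eq: L-scheme} at the previous iteration $i$, which rewrites $\t(f^n,v_h)-(\theta(\psi^{n,i-1}_h)-\theta(\psi^{n-1}_h),v_h)$ as $L(\psi^{n,i}_h-\psi^{n,i-1}_h,v_h)+\t(K(\theta(\psi^{n,i-1}_h))\del(\psi^{n,i}_h+z),\del v_h)$. Splitting $\theta(\psi^{n,i}_h)-\theta(\psi^{n-1}_h)$ across the index $i-1$ and substituting, the right-hand side collapses to
\begin{equation*}
(L(\psi^{n,i}_h-\psi^{n,i-1}_h)-(\theta(\psi^{n,i}_h)-\theta(\psi^{n,i-1}_h)),\d\psi^{i+1}_h)-\t((K(\theta(\psi^{n,i}_h))-K(\theta(\psi^{n,i-1}_h)))\del(\psi^{n,i}_h+z),\del\d\psi^{i+1}_h),
\end{equation*}
the L-scheme analogue of $T_2$ with the flux set to zero.

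I would then apply the Cauchy--Schwarz inequality termwise, inserting the weights $L^{\pm1/2}$ in the first inner product and $K(\theta(\psi^{n,i}_h))^{\pm1/2}$ in the second; this produces exactly the factors $\eta_{\LL}^{i,{\rm poten}}\,\|L^{1/2}\d\psi^{i+1}_h\|$ and $\t\,\eta_{\LL}^{i,{\rm flux}}\,\|K(\theta(\psi^{n,i}_h))^{1/2}\del\d\psi^{i+1}_h\|$. A final Cauchy--Schwarz in $\mathbb{R}^2$, pairing these against the two energy-norm components $\|L^{1/2}\d\psi^{i+1}_h\|$ and $\sqrt\t\,\|K(\theta(\psi^{n,i}_h))^{1/2}\del\d\psi^{i+1}_h\|$, bounds the sum by $\eta_{\LL}^i\,\norm{\d\psi^{i+1}_h}_{L,\psi^{n,i}_h}$. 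Dividing through by the energy norm (the estimate being trivial when it vanishes) yields the claim.

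There is no genuine obstacle here; the one point meriting care is seeing why only Assumption~\ref{as:auxiliary_functions} is needed. Because the $L^2$-part is weighted by the constant $L>0$, the factor $L^{-1/2}$ in $\eta_{\LL}^{i,{\rm poten}}$ is globally well defined, so no splitting into degenerate and non-degenerate elements (and hence no equilibrated flux $\efl$) is required; and since the L-scheme contains no $(K\circ\theta)'$ term, the coercivity-correcting factor $\tfrac{2}{2-C_N^i}$ and Assumption~\ref{as:convection} both drop out, which is why $\eta_{\LL}^i$ has no such prefactor. Well-posedness of the increment equation is immediate from $\k_m>0$ and $L>0$, exactly as noted after \eqref{eq: linear schemes}.
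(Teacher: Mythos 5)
Your proposal is correct and follows exactly the route the paper intends: it specializes the proof of \Cref{prop:LtoN} by testing the increment equation with $\d\psi^{i+1}_h$, substituting the $i^{\rm th}$ L-scheme iteration into the residual, and applying weighted Cauchy--Schwarz, with the convection term $T_1$ and the equilibrated flux $\efl$ absent — which is precisely why the paper notes that neither Assumption~\ref{as:convection} nor any degenerate-region treatment is needed and why the prefactor $\tfrac{2}{2-C_N^i}$ disappears. The paper omits the detailed proof for exactly this reason, and your argument fills it in faithfully.
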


The detailed proof is again omitted. Observe that for the estimate above, neither Assumption \ref{as:convection} nor any separate treatment of the degenerate domains is required. 

\subsection{L-adaptive algorithm}
Based on \Cref{prop:LtoL}, we propose an 
algorithm that  selects optimal $L$-values adaptively.

\begin{algorithm}[H]
	\caption{The $L$-adaptive scheme}\label{alg: L-adaptive}
	\begin{algorithmic}
		\Require $\boldsymbol{\psi}^{n,0}\in L^2(\Om)$ as initial guess,  $L_M:=\sup_{\psi\in \R}\theta'(\psi)$, and $L_m:=L_M/8$
  
        \Ensure $C_{\LL}=\sqrt{2}$, $L=L_m$
	\For{i=1,2,..}
  \State
   Compute iterate using  L-scheme, i.e., \eqref{eq: L-scheme}
        \If{$\eta^i_{\LL}>\eta^i_{\rm lin}$} 
        
          \State Replace $L_m=L$,   $L=\min(C_{\LL} L,L_M)$, and \textbf{continue}.
	\ElsIf{$\eta_{\LL}^j>0.8\, \eta^j_{\rm lin}$ for $j\in \{i,i-1,i-2\}$}
		  \State Replace $L=\max(0.9 L,1.1L_m)$ and \textbf{continue}.
		
  \EndIf

		\EndFor

	\end{algorithmic}
\end{algorithm}

\subsection{Numerical result}
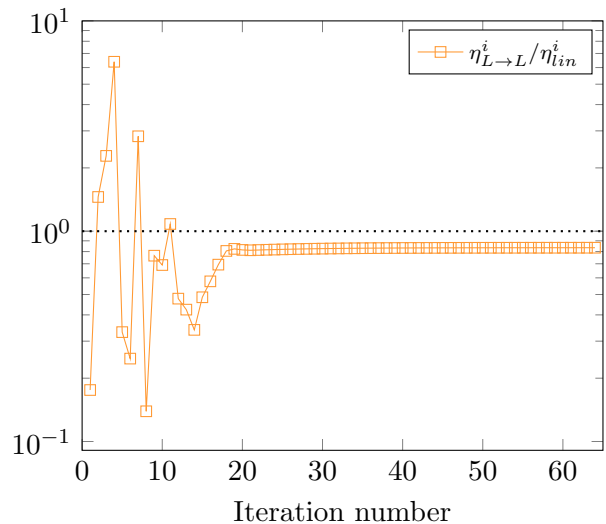
\begin{figure}[H]
\centering
\begin{tikzpicture} 
\tikzstyle{every node}=[font=\small]
\begin{semilogyaxis}
[ legend entries={{\scriptsize $\eta_{L\to L}^i/\eta_{lin}^i$}},
    ,
    xlabel={\scriptsize Time},
    xmin = 0,
    xmax = 65,
    ymax = 10,
xlabel=Iteration number] 
%\addplot[color=colorL1,mark=square,mark options={scale=1.5}] 
%coordinates { (1,0.6392) (2,2.80092721) (3,0.54187) (4,1.34453961) (5,0.561055)   (6,0.2794973) (7,0.10608104) (8,0.024128) (9,0.00151754) (10,0.0000124) (11,0.0000056914)  }; 
%\addplot[mark=square,only marks,fill=colorL1,mark options={scale=1.5}] coordinates {(1,0.6392)  (3,0.54187)  (5,0.561055) };

\addplot[color=colorL2,mark=square,mark options={scale=1}] 
coordinates { (1,0.175721610581879)
(2,1.45460629244473)
(3,2.28101218041231)
(4,6.39219157005095)
(5,0.331235652168848)
(6,0.247852830535918)
(7,2.82702436154929)
(8,0.139224515533529)
(9,0.764804883515017)
(10,0.690524321177711)
(11,1.08203227005026)
(12,0.477903544849466)
(13,0.423715095636787)
(14,0.339488557479197)
(15,0.485098894879873)
(16,0.577562802134723)
(17,0.693418663695379)
(18,0.805636486091624)
(19,0.823988406216277)
(20,0.814606614010852)
(21,0.811519129540219)
(22,0.813793682860139)
(23,0.815838346281356)
(24,0.817731015055316)
(25,0.819580976496173)
(26,0.821172796476119)
(27,0.822684045347442)
(28,0.823994748122771)
(29,0.825216109270237)
(30,0.826268288970343)
(31,0.827241729304255)
(32,0.828079508329717)
(33,0.828849030755492)
(34,0.829510617841569)
(35,0.830115215072827)
(36,0.830634608875146)
(37,0.831107417333625)
(38,0.831513455463428)
(39,0.831882056673224)
(40,0.832198620194322)
(41,0.832485491619408)
(42,0.832731981560047)
(43,0.832955149539867)
(44,0.833147071270109)
(45,0.833320799907759)
(46,0.833470388521895)
(47,0.833605848017136)
(48,0.833722664580352)
(49,0.833828534915165)
(50,0.83391999856077)
(51,0.834002988514966)
(52,0.834074829025534)
(53,0.834140106124171)
(54,0.834196736154638)
(55,0.834248273838805)
(56,0.834293087303316)
(57,0.834333939521634)
(58,0.834369546671184)
(59,0.834402062380314)
(60,0.834430472891768)
(61,0.834456461283321)
(62,0.834479225186496)
(63,0.834500083312994)
(64,0.834518399120227)
}; 
%\addplot[mark=square*,only marks,fill=colorL1,mark options={scale=1.5}] coordinates {(1,0.6392)  (3,0.54187)  (5,0.561055) };
%\addplot[color=colorL2,mark=square] 
%coordinates { (2,2.80092721) (4,1.34453961) (6,0.2794973) (7,0.10608104) (8,0.024128) (9,0.00151754) (10,0.0000124) (11,0.0000056914) }; 
%\addplot[mark=pentagon*,red] coordinates { (3,0.52584307)};
\addplot[thick,dotted] coordinates {(0,1) (65,1)};

\end{semilogyaxis} 

%\node[color=colorLN,anchor=(10,5)]{5};
\end{tikzpicture}
\caption{Test case 1: Strictly unsaturated medium: L-scheme with L-adaptivity and initial stabilization parameter $L_0=L_2/8$, $h=\sqrt{2}/40$ and $\t=1$.}
\label{fig:L-adaptivity}
\end{figure}

In \Cref{fig:L-adaptivity} we show a result where the $L$-adaptive scheme is superior to a fixed $L$-approach. In this case, $L_\theta/2$ is too small for convergence due to a large time step size. Compared with fixed $L_1$ with the same mesh size and time step size, see \Cref{fig: convergence time step ex 1}, the number of iterations is improved by 20. For smaller time steps, the numerical results reveal that Algorithm \ref{alg: L-adaptive}  results in  roughly the same number of iterations compared to a fixed and optimized $L=L_1$  lesser than $L_\theta$. But in all examples considered, it uses fewer iterations than simply choosing $L=L_2=L_\theta$. The advantage of such an adaptive technique is that an optimization study of $L$ does not need to be conducted prior to the simulation. However, since the $L$-adaptive strategy does not significantly improve the behavior of the L-scheme over the optimized $L=L_1$, we refrained from including it in Algorithm \ref{alg: a-posteriori} for the sake of simplicity.

\section{Computation of equilibrated flux}\label{sec: eq flux}
Recalling \Cref{def:equilibrated flux,def:equilibrated flux Newton}, let us propose a simple algorithm to compute an equilibrated flux $\bm{\sigma}_h \in \bm{{\rm RT}}_p(\calT_h)\,\cap\, \bm{H}({\rm div},\Om)$ satisfying $\del\cdot \bm{\sigma}_h=\Pi_h f$ in $\calT_{\rm deg}^{i,\eps}$, and $\del\cdot \bm{\sigma}_h=0$ otherwise, where $f\in L^2(\Om)$. Defining $\bm{Q}_h:=\bm{{\rm RT}}_p(\calT_h)\,\cap\, \bm{H}({\rm div},\Om)$ and $\Tilde{V}_h:= \{v_h\in \calP_p(\calT_h)|\; {\rm Tr}_{\p\Om}(v_h)=0 \}$, we seek a pair $(\bm{\sigma}_h, r_h)\in \bm{Q}_h\times \Tilde{V}_h$ that satisfies the mixed finite element problem,
\begin{subequations}
    \begin{align}
     (K(1)^{-1}\bm{\sigma}_h, \bm{q}_h)&= (r_h,\del\cdot \bm{q}_h),\quad &\forall\, \bm{q}_h\in \bm{Q}_h,\\
     (\del \cdot\bm{\sigma}_h, v_h)&= (f, v_h),\quad &\forall\, v_h\in \Tilde{V}_h.
\end{align}
\end{subequations}
The advantage of this flux is that it minimizes $\|K(1)^{-\frac{1}{2}}\bm{\sigma}_h\|$ which appears in the estimates in \Cref{prop:LtoN,prop:NtoN}. For practical purposes, a much coarser mesh can be used outside of $\calT_{\rm deg}^{i,\eps}$ to compute it, and the stiffness matrix can be precomputed to accelerate the computation.

\end{appendices}

\section*{Acknowledgements}
The work of JWB is funded in part through the Center of Sustainable Subsurface Resources (Norwegian Research Council project 331841) and the `FracFlow' project funded by Equinor, Norway through Akademiaavtalen. KM acknowledges the support of FWO (Fonds Wetenschappelijk Onderzoek) for funding him through the `Junior Postdoctoral Fellowship' and to Akademiaavtalen for funding his visit to the University of Bergen.

\bibliographystyle{elsarticle-num}
%\bibliographystyle{elsarticle-harv} 
%\bibliographystyle{plain}

%% else use the following coding to input the bibitems directly in the
%% TeX file.

%%\begin{thebibliography}{00}

%% \bibitem[Author(year)]{label}
%% Text of bibliographic item

%%\bibitem[ ()]{}

%%\end{thebibliography}
\end{document}